\newtheorem{definition}{Definition}%[section]
\newtheorem{lemma}[definition]{Lemma}%[section]
\newtheorem{proposition}[definition]{Proposition}%[section]
\newtheorem{theorem}[definition]{Theorem}%[section]
\theoremstyle{remark}
\newtheorem{remark}[definition]{Remark}%[section]
\newcommand{\C}{\mathbb{C}}
\newcommand{\R}{\mathbb{R}}
\newcommand{\N}{\mathbb{N}}
\renewcommand{\Re}{\operatorname{Re}}
\renewcommand{\Im}{\operatorname{Im}}
\newcommand{\ensemble}[4]{\{ #1 \}_{#2\le #3 \le #4}}
\newcommand{\setleft}[2]{ \left\{ \left. #1 \,\right|\, #2 \right\} }
\newcommand{\setright}[2]{ \left\{ #1 \,\left|\, #2 \right.\right\} }
\newcommand{\Rono}{\R^{1,n+1}}
\newcommand{\Hn}{\mathbb{H}^{n+1}}
\newcommand{\Sn}{\mathbb{S}^n}
\newcommand{\Un}{\mathbb{U}^{n+1}}
\newcommand{\SO}{\mathrm{SO}}
\newcommand{\End}{\operatorname{End}}
\newcommand{\T}{\mathrm{T}}
\newcommand\WF{\mathrm{WF}}
\newcommand\Dist{\mathcal{D}'}
\newcommand{\supp}{\mathrm{supp}}
\newcommand{\even}{\mathrm{even}}
\newcommand{\grad}{\operatorname{grad}}
\newcommand{\Lie}{\mathcal{L}}
\newcommand{\n}{\nabla}
\newcommand\tr{\operatorname{tr}}
\newcommand{\ve}{\varepsilon}
\newcommand{\rlnm}{\rho^{\lambda+\frac n2 - m}}
\newcommand{\drr}{\tfrac{d\rho}{\rho}}
\newcommand{\rdr}{{\rho\partial_\rho}}
\newcommand{\Sym}{\mathrm{Sym}}
\newcommand\sA{\mathscr{A}}
\newcommand\co[3]{{\{#1\to#2\}#3}}
\newcommand{\ip}{\operatorname{\lrcorner}}
\renewcommand{\sp}{\operatorname{\cdot}}
\newcommand{\trace}{\operatorname{\Lambda}}
\newcommand{\adjtrace}{\operatorname{L}}
\renewcommand{\d}{\operatorname{d}}
\renewcommand{\div}{\operatorname{\delta}}
\newcommand{\Riemann}{\operatorname{R}}
\newcommand\twovector[2]{\left[\begin{array}{c} 	#1 \\ #2 \end{array}\right]}
\newcommand\twomatrix[4]{\left[\begin{array}{cc}	#1&#2 \\
										#3&#4 \end{array}\right]}
\newcommand{\nm}{\n_-}
\newcommand{\dm}{\d_-}
\newcommand{\divp}{\div_+}
\newcommand{\Deltap}{\Delta_+}
\newcommand{\Nm}{N^-}
\newcommand{\Np}{N^+}
\newcommand{\Npm}{N^\pm}
\newcommand{\V}[3]{V_{#1}^{#2}(#3)} % band structure
\newcommand{\Bd}{\mathrm{Bd}} % boundary distributions
\newcommand{\Poisson}{\operatorname{\mathcal{P}}}
\newcommand{\Resolvent}[2]{\mathcal{R}_{#1}(#2)}
\newcommand{\ResolventHol}[2]{\mathcal{R}^{\mathrm{Hol}}_{#1}(#2)}
\newcommand{\Resonant}[3]{\mathrm{Res}_{#1}^{#2}(#3)}
\newcommand{\Residue}[2]{\mathrm{Res}_{#1}({#2})}
\newcommand{\Projector}[2]{\textstyle \prod_{#1}^{#2}}
\newcommand{\AVasyDown}{\operatorname{\mathcal{A}}}
\newcommand{\Xone}{X}
\newcommand{\Xthree}{X_{e}}
\newcommand{\Mone}{M}
\newcommand{\Mthree}{M_{e}}
\newcommand{\gamb}{\eta}
\newcommand{\s}{s}
\newcommand{\sds}{\s\partial_\s}
\newcommand{\dss}{\tfrac{d\s}{\s}}
\renewcommand{\t}{t}
\newcommand{\RplusT}{\R^+_\t}
\newcommand{\Lsections}{L^2}
\newcommand{\E}{\mathcal{E}}
\newcommand{\F}{\mathcal{F}}
\newcommand{\DeltaAmb}{\operatorname{\square}}
\newcommand{\QVasyUp}{\operatorname{\mathbf{Q}}}
\newcommand{\QVasyDown}{\operatorname{\mathcal{Q}}}
\newcommand{\QVasyDOWNL}{\QVasyDown_\lambda}
\newcommand{\PVasyUp}{\operatorname{\mathbf{P}}}
\newcommand{\PVasyDown}{\operatorname{\mathcal{P}}}
\newcommand{\PVasyDOWNL}{\PVasyDown_\lambda}
\newcommand{\DVasyUp}{\mathbf{D}}
\newcommand{\DVasyDown}{\mathcal{D}}
\newcommand{\ZeroOrderVasyUp}{\mathbf{G}}
\newcommand{\ZeroOrderVasyDown}{\mathcal{G}}
\title[Ruelle and Quantum Resonances]
{Ruelle and Quantum Resonances for Open Hyperbolic Manifolds}
\author[C. Hadfield]{Charles Hadfield}
\address{
\'Ecole Normale Sup\'erieure, 45 rue d'Ulm,
75230 Paris cedex 05, France}
\email{charles.hadfield@ens.fr}
\begin{document}

%%%%% %%%%% %%%%% %%%%% %%%%% %%%%% %%%%% %%%%% %%%%% %%%%% %%%%% %%%%%

\begin{abstract}
We establish a direct classical-quantum correspondence on convex cocompact hyperbolic manifolds between the spectrums of the geodesic flow and the Laplacian acting on natural tensor bundles. This extends previous work detailing the correspondence for cocompact quotients.
\end{abstract}

\maketitle
\thispagestyle{empty}

%%%%% %%%%% %%%%% %%%%% %%%%% %%%%% %%%%% %%%%% %%%%% %%%%% %%%%% %%%%%

\section{Introduction}

On a closed hyperbolic surface, Selberg's trace formula
\cite{selberg}
establishes a connection between eigenvalues of the Laplacian (on functions) and closed geodesics via the Selberg zeta function. In the convex cocompact setting this result is established by Patterson and Perry
\cite{patterson-perry}.
In this open setting the role which the eigenvalues of the Laplacian played is now played by quantum resonances for the Laplacian. That is, the poles of the meromorphic extension of the resolvent of the Lapacian.
Although these results indicate a correspondence between classical and quantum phenomena, it is somewhat indirect as it uses the closed geodesics to represent classic phenomena rather than treating directly the vector field which generates the geodesic flow on the unit tangent bundle.

The geodesic flow on the unit tangent bundle of a closed hyperbolic surface is an example of an Anosov flow. Considerable attention has been given to such flows recently using functional analytical techniques \cite{butterley-liverani} and microlocal methods \cite{faure-sjostrand} and has led to striking results including the meromorphic extension to the complex plane of the Ruelle zeta function of a $C^\infty$ Anosov flow on a compact manifold \cite{giulietti-liverani-pollicott, dyatlov-zworski}. The microlocal methods presented in \cite{faure-sjostrand} have been extended to the setting where the manifold need not be compact in \cite{dyatlov-g} in order to study Axiom A flows. An example of such a flow is the geodesic flow on the unit tangent bundle of a convex cocompact hyperbolic surface. The resolvent of said flow has a meromorphic extension to the complex plane whose poles define Ruelle resonances for this flow.

Returning to the classical-quantum correspondence of interest in this article, it is Dyatlov, Faure, and Guillarmou \cite{dfg} who establish a direct link between eigenvalues of the Laplacian on a closed hyperbolic surface and Ruelle resonances of the generator of the geodesic flow on the unit tangent bundle. This result had previously been announced by Faure and Tsujii \cite[Proposition 4.1]{faure-tsujii}. The extension to the convex cocompact setting, showing the link between quantum resonances for the Laplacian and Ruelle resonances for the generator of the geodesic flow, has recently been established by Guillarmou, Hilgert, and Weich \cite{ghw}.

The article \cite{dfg} studies not only surfaces, but rather cocompact quotients of hyperbolic space of any dimension. Interestingly, in this higher dimensional setting, the correspondence is no longer simply between Ruelle resonances and the spectrum of the Laplacian acting on functions, but rather the spectrums of the Laplacian acting on symmetric tensors (precisely, those tensors which are trace-free and divergence-free). The goal of this present article is to establish the classical-quantum correspondence in the convex cocompact setting for manifolds of dimension at least 3.

Let us now be more explicit and denote by  
$\Xone$ 
a convex cocompact quotient of hyperbolic space
$\Hn$
where
$n\ge 2$.

Consider first the classical phenomena. Denote by 
$A$
the generator of the geodesic flow 
(a tangent vector field on the unit tangent bundle
$S\Xone$).
The operator 
$A+\lambda$
is invertible as an operator on
$\Lsections$
sections whenever
$\Re\lambda>0$.
Let us introduce the following notation for its resolvent:
$\Resolvent{A,0}{\lambda}=(A+\lambda)^{-1}$.
By
\cite{dyatlov-g},
the resolvent admits a meromorphic extension
$\Resolvent{A,0}{\lambda}:C_c^\infty(S\Xone) \to \Dist(S\Xone)$
for
$\lambda\in\C$
whose poles are of finite rank.
These poles are called Ruelle resonances.
In fact the result of
\cite{dyatlov-g}
is very robust and can be used for more general objects than flows acting on functions. In particular, we note that the tangent bundle 
$\T\Xone$
over
$\Xone$
may be pulled back to a bundle over
$S\Xone$ 
which decomposes canonically into a line bundle spanned by 
$A$ 
and the perpendicular
$n$-dimensional
Euclidean
bundle denoted
$\E$.
Considering the flow as parallel transport, it is easy to extend the vector field
$A$ 
to a first-order differential operator on the tensor bundle 
$\E^*$
as well as on symmetric tensor products of
$\E^*$.
Again,
$A+\lambda$
is invertible as an operator on 
$\Lsections$
sections of
$\Sym^m\E^*$
whenever
$\Re\lambda>0$, 
and its resolvent admits a meromorphic extension
\begin{align}
\Resolvent{A,m}{\lambda} :C_c^\infty(S\Xone;\Sym^m\E^*) \to \Dist(S\Xone;\Sym^m\E^*)
\end{align}
for 
$\lambda\in\C$
whose poles are of finite rank. For a pole
$\lambda_0$,
the residue is a finite rank operator
$\Projector{A,m}{\lambda_0}$
whose image defines the set of generalised Ruelle resonant states of tensor order 
$m$.
These states are characterised by a precise support and wave-front condition detailed in 
Section~\ref{sec:ruelle}
as well as the fact that they are annihilited by some power of 
$A+\lambda_0$. 
In the cocompact setting, the poles are necessarily simple, hence such a state is annihilated immediately by
$A+\lambda_0$. 
The convex cocompact case may include non-simple poles and states are generalised in the sense that a power of 
$A+\lambda_0$
is required to annihilate the state.
Said power is called the Jordan order of the state.
The set of generalised Ruelle resonant states of tensor order $m$ associated with the pole $\lambda_0$ is denoted
\begin{align}
\Resonant{A,m}{}{\lambda_0}.
\end{align}

Consider second the quantum phenomena. The Levi-Civita connection of
$\Xone$
gives the positive rough Laplacian
$\n^*\n$.
On functions, the operator 
$\n^*\n-s(n-s)$
is invertible on 
$\Lsections$
sections whenever
$\Re\s \gg 1$
and the resolvent 
$\Resolvent{\Delta,0}{s} = ( \n^*\n-s(n-s) )^{-1}$
admits a meromorphic extension 
$\Resolvent{\Delta,0}{s} : C_c^\infty(\Xone) \to \rho^{s} C^\infty_\even(\overline \Xone)$
for
$s\in\C$ whose poles are of finite rank,
see
\cite{mazzeo-melrose, guillope-zworski:pb, g:duke}.
Here,
$\rho$
denotes any even boundary defining function for
$\Xone$
(detailed in 
Section~\ref{sec:quantum}).
This result is usually stated for the more general geometry of manifolds which are even asymptotically hyperbolic 
\cite[Definition 5.2]{g:duke}.
Analogous to the previous paragraph, the poles of the meromorphic extension define quantum resonances.
The correspondence discovered in 
\cite{dfg}
appeals to the eigenvalues of the Laplacian acting on symmetric tensors, thus a notion of quantum resonances for such a Laplacian is also required for the convex cocompact setting. This has recently been obtained in \cite{h:quantum} using Vasy's method
\cite{v:ml:inventiones, v:ml:functions, v:ml:forms, zworski:vm}.
Denote symmetric 
$m$-tensors
$\Sym^m\T^*\Xone$
which are trace-free by
$\Sym^m_0\T^*\Xone$.
The positive rough Laplacian acts on these tensors and the operator
$\n^*\n-s(n-s)-m$
is invertible on
$\Lsections$
sections whenever
$\Re\s \gg 1$.
Morevoer, when restricting further to those tensors which are divergence-free,
$\ker\div$,
the resolvent admits a meromorphic extension
\begin{align}
\Resolvent{\Delta,m}{s} 
: C_c^\infty(\Xone;\Sym^m_0 \T^*\Xone) \cap \ker \div 
\to
\rho^{s -m} C^\infty_\even(\overline \Xone;\Sym^m_0 \T^*\Xone) \cap \ker \div. 
\end{align}
for
$s\in\C$
whose poles are of finite rank.
For a pole 
$s_0$,
the residue is a finite rank operator
$\Projector{\Delta,m}{s_0}$
whose image defines the set of generalised quantum resonant states of tensor order
$m$
\begin{align}
\Resonant{\Delta,m}{}{s_0}.
\end{align} 
Unlike states associated with Ruelle resonances, a characterisation of states associated with quantum resonances was not previously available, this is remedied in 
Lemma~\ref{lem:AssOfQRes} 
detailing the asymptotic structure of such states.
The proof of this lemma is inspired by 
\cite[Proposition 4.1]{ghw}
but it relies heavily on various operators constructed in 
\cite{h:quantum}
and it does not seem to follow in any direct manner from the mere existence of the meromorphic extension for the resolvent of the Laplacian.

With the two notions of quantum and classical resonant states introduced, we may announce
\begin{theorem}
\label{thm:CQCorrespondence}
Let 
$\Xone = \Gamma \backslash \Hn$ 
be a smooth oriented convex cocompact hyperbolic manifold with 
$n\ge2$, 
and 
$ \lambda_0 \in  \C \backslash ( -\tfrac n2 - \tfrac 12 \N_0 ) $.
There exists a vector space linear isomorphism between generalised Ruelle resonant states
\begin{align}
\Resonant{A,0}{}{\lambda_0}
\end{align}
and the following space of generalised quantum resonant states
\begin{align}
%\bigcup_{j=1}^{J(\lambda_0)}
\bigoplus_{m\in\N_0}
\bigoplus_{k=0}^{\lfloor \frac m2 \rfloor}
\Resonant{\Delta,m-2k}{}{\lambda_0+m+n}.
\end{align}
\end{theorem}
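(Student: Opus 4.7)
The plan is to adapt the cocompact proof of Dyatlov--Faure--Guillarmou \cite{dfg} to the convex cocompact setting, broadly following the two-dimensional argument of Guillarmou--Hilgert--Weich \cite{ghw} but producing a uniform construction at all tensor orders. The strategy is to exhibit an explicit linear map in each direction and verify that the two are mutually inverse. For the forward map $\Resonant{A,0}{}{\lambda_0} \to \bigoplus_{m,k} \Resonant{\Delta,m-2k}{}{\lambda_0+m+n}$, one lifts $u \in \Resonant{A,0}{}{\lambda_0}$ to a $\Gamma$-invariant distribution $\tilde u$ on $S\Hn$ whose wavefront set lies in the annihilator of the unstable bundle over the lifted trapped set. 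Applying the unstable horocyclic operator $\mathcal{U}_-$ (built on $\Sym^\bullet\E^*$ using parallel transport along the unstable foliation) $m$ times produces a $\Sym^m\E^*$-valued distribution satisfying $(A+\lambda_0+m)\mathcal{U}_-^m\tilde u = 0$; decomposing its fibrewise values into the trace-free irreducible summands of order $m-2k$ and pushing forward along $S\Xone \to \Xone$ yields candidate quantum resonant states of tensor order $m-2k$ at spectral parameter $\lambda_0+m+n$.

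For the inverse map I would use a Poisson transform $\PVasyDown_{\lambda_0+m+n}$ intertwining the shifted rough Laplacian on trace-free, divergence-free symmetric tensors on $\Xone$ with the operator $A+\lambda_0$ on $S\Xone$; a quantum state $v \in \Resonant{\Delta,m-2k}{}{\lambda_0+m+n}$ is sent to its Poisson lift, post-composed with a polynomial in the stable horocyclic operator $\mathcal{U}_+$ and in the trace insertion that reassembles the $(m,k)$-component at tensor order zero. The central analytic input is Lemma~\ref{lem:AssOfQRes}: its description of the leading boundary asymptotic of generalised quantum resonant states as a density on the limit set is precisely what is required so that the Poisson integral converges to a distribution on $S\Xone$ with the correct support and wavefront set, and therefore lands in $\Resonant{A,0}{}{\lambda_0}$. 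This step also supplies the intertwining relations between $A+\lambda_0$ and $\n^*\n - s(n-s) - (m-2k)$ in the two directions, since Lemma~\ref{lem:AssOfQRes} identifies the asymptotic data which is acted upon by both sides equivalently.

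The main obstacle, and the chief departure from the cocompact case, is to verify that the two constructions are mutually inverse and that the direct sum is exact. In the cocompact setting one reduces this to a Plancherel-type decomposition on $\Gamma\backslash G$; here one must instead invoke microlocal uniqueness of the Poisson transform on $\Hn$, together with the $\Gamma$-equivariance of the limit set and the asymptotic structure from Lemma~\ref{lem:AssOfQRes}, to show that every Ruelle state is reconstructed as a sum of Poisson transforms over $(m,k)$ and that the images of distinct components are linearly independent. A secondary difficulty absent from \cite{dfg} is the possibility of non-simple poles: one must track the Jordan order through the correspondence by checking that the intertwining relations for $\mathcal{U}_\pm$ and $\PVasyDown$ with $A+\lambda_0$ and the shifted Laplacians are strict, so that generalised kernels are sent to generalised kernels of the same order. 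The hypothesis $\lambda_0 \notin -\tfrac{n}{2}-\tfrac12\N_0$ is needed precisely to exclude those exceptional values at which the Poisson intertwiner develops a kernel or cokernel and the correspondence would degenerate.
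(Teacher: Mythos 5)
Your roadmap matches the paper's strategy in broad outline — horosphere operators filter Ruelle states into bands, a Poisson transform connects vector-valued Ruelle states to quantum states, and the asymptotic characterisation of Lemma~\ref{lem:AssOfQRes} is the pivot. But there are several concrete gaps that would block a complete proof along your lines.

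First, you do not justify \emph{surjectivity} of the band decomposition: it is not enough to apply $\dm^m$ to a Ruelle state and observe it lands in $\Resonant{A,m}{j}{\lambda_0+m}\cap\ker\nm$; one must show that every element of this target is hit. The paper's mechanism is the explicit inversion identity $(\dm)^{m}(\Deltap)^k(\divp)^r=\adjtrace^k P_{r,k}(A)$ from Lemma~\ref{lem:HorocycleInversion}, where the polynomial $P_{r,k}$ is nonvanishing at $-(\lambda_0+m)$ outside the exceptional set, and the surjectivity is then proved by an induction on Jordan order (needed because $A$ is not scalar on generalised states). Your phrase ``post-composed with a polynomial in the stable horocyclic operator'' gestures at this but the identity itself, the nonvanishing of $P_{r,k}$, and the induction are what make the argument close, and none of them are verified. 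Related: you miss the genuine degenerate case $\lambda_0\in-2\N$, $r=0$, $k=m$, where $P_{r,k}$ \emph{does} vanish; the paper has to handle this separately (Proposition~\ref{prop:lambda=m-case}) by showing the corresponding space is zero via an $L^2$ lower bound on the Laplacian. Without that, your claimed isomorphism might not even be well defined in those degrees.

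Second, your strategy of exhibiting two explicit maps and then invoking ``microlocal uniqueness of the Poisson transform'' to show they are mutual inverses is vague at the crucial step. The paper avoids this by establishing a chain of independent isomorphisms: Jordan-order filtration, band filtration with exactness from Proposition~\ref{prop:IdentifyBandsWithSymmetricPowerOfE}, the $\adjtrace$-decomposition \eqref{eq:DecomposeMTensorsTrace}, and finally the $\pi_{0*}$ isomorphism of Proposition~\ref{prop:RuelleToQuantum}. What actually makes the last step work is (i) injectivity of $\Poisson_\lambda$ and surjectivity onto tempered harmonic tensors from \cite{dfg}, (ii) the weak expansion of Lemma~\ref{lem:asymptotics-from-dfg} and Remark~\ref{rem:lem-dfg}, which forces $\supp(\omega)\subset K_\Gamma$ given the boundary asymptotics from Lemma~\ref{lem:AssOfQRes}, and (iii) elliptic regularity on the unstable directions to pin down the wavefront set inside $E_+^*$. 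You assert that Lemma~\ref{lem:AssOfQRes} supplies ``the leading boundary asymptotic as a density on the limit set'' — it does not; it gives a $\rho^{s_0-m}(\log\rho)^k$ asymptotic expansion on $\overline\Xone$, and the passage from that to support of the boundary distribution on $K_\Gamma$ requires the separate weak-expansion argument. Your proposal skips that step entirely.

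Finally, a notational caution: $\PVasyDown_\lambda$ in the paper denotes Vasy's second-order operator, not a Poisson transform; the paper's Poisson operator is $\Poisson_\lambda$.
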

In the following paragraph we sketch one direction of this correspondence. We mention here that a key ingredient is the Poisson operator used to identify Ruelle resonant states and quantum resonant states via equivariant distributions
on 
$\Sn=\partial\overline\Hn$.
This operator is very finely studied in
\cite{dfg}
and shown to be an isomorphism outside of the exceptional set
$-\tfrac n2 - \tfrac 12 \N_0$.
It would certainly be interesting to study more closely this exceptional set likely leading to topological and conformal considerations. Indeed in the setting of convex cocompact surfaces 
\cite{ghw}, 
poles in the exceptional set are studied and are related to the topology of the surfaces.

Let us briefly sketch one direction of the isomorphism announced in
Theorem~\ref{thm:CQCorrespondence}.
Consider a Ruelle resonance
$\lambda_0\in\C$ 
which, for simplicity, we will assume is a simple pole.
Associated with
$\lambda_0$,
consider a Ruelle resonant state
\begin{align}
u\in\Resonant{A,0}{}{\lambda_0}.
\end{align}
That is,
$u\in \Dist(S\Xone)$
solves
$(A+\lambda_0)u=0$
subject to a wave-front condition detailed in 
Section~\ref{sec:ruelle}.
A non-trivial idea contained in
\cite{dfg}
is the construction of horosphere operators that generalise the horocycle vector fields present for hyperbolic surfaces. Specifically, recalling the $n$-dimensional bundle
$\E$,
there exists a differential operator
\begin{align}
\d_- : C^\infty(S\Xone;\Sym^m \E^*) \to C^\infty(S\Xone ; \Sym^{m+1}\E^*)
\end{align}
which may be morally thought of as a symmetric differential along the negative horospheres.
Moreover, this operator enjoys the commutation relation
\begin{align}
[A,\d_-]=-\d_-.
\end{align}
As (tensor valued) Ruelle resonances are also restricted to 
$\Re\lambda\le0$,
this commutation relation implies the existence of
$m\in\N_0$
such that
$v:=(\d_-)^m u \neq 0$
and
$\d_-v=0$.
Moreover,
$(A+\lambda_0+m)v=0$.
As the vector bundle $\E^*$ carries a natural metric, we have a notion of a trace operator,
$\trace$ 
and its adjoint 
$\adjtrace$
acting on 
$\Sym^m\E^*$. 
Denoting the bundle of trace-free symmetric tensors of rank $m$ by
$\Sym^m_0\E^*$, 
we decompose 
$v$
into trace-free components
\begin{align}
v = \sum_{k=0}^{\lfloor \frac m2 \rfloor} \adjtrace^{k} v^{(m-2k)},
\qquad
v^{(m-2k)} \in \Dist(S\Xone ; \Sym^{m-2k}_0 \E^*) \cap \ker (A+\lambda_0+m).
\end{align}
Integrating over the fibres of 
$S\Xone\to \Xone$
allows
$v^{(m-2k)}$ 
to be pushed to a symmetric $(m-2k)$-tensor on
$\Xone$
\begin{align}
\varphi^{(m-2k)} := \pi_{0*} v^{(m-2k)} \in C^\infty( \Xone ; \Sym^{m-2k} \T^*\Xone).
\end{align}
and the properties of the Poisson transform
imply that
\begin{align}
\varphi^{(m-2k)} \in \ker (\n^*\n + (\lambda_0+m)(n+\lambda_0+m)-(m-2k))
\end{align}
Lemma~\ref{lem:AssOfQRes} 
gives a classification of generalised quantum resonant states from which we conclude
that 
$\varphi^{(m-2k)}$ 
is indeed a generalised quantum resonant state associated with the resonance
$\lambda_0+m+n$.
(In fact it is a true quantum resonant state as it is immediately killed by
$\n^*\n + (\lambda_0+m)(n+\lambda_0+m)-(m-2k)$
rather than by a power thereof.)
Stated differently,
\begin{align}
\varphi^{(m-2k)} \in  \Resonant{\Delta,m-2k}{}{\lambda_0+m+n}.
\end{align}

Two aspects of the argument render the isomorphism considerably labour intensive. First, one needs to deal with inverting the horosphere operators, however for this, we may appeal to calculations from
\cite[Section 4]{dfg}
appealing to a polynomial structure present in the proof.
Second, one needs to consider the possibility that the Ruelle resonance is not a simple pole, but rather, there may exist generalised Ruelle resonant states.

This article is structured as follows.
Section~\ref{sec:prelim}
recalls from 
\cite{hms}
conventions for symmetric tensors.
%and introduces evenly asymptotically hyperbolic manifolds from which an extended manifold is constructed (and is necessary for operators introduced in Section~\ref{sec:quantum}).
Section~\ref{sec:hspace}
recalls numerous objects on hyperbolic space which are present in
\cite{dfg}
and which also descend to objects on convex cocompact quotients.
Section~\ref{sec:ruelle}
examines Ruelle resonances in the current setting.
It provides a key result from 
\cite{dyatlov-g}
which characterises Ruelle resonances (and generalised resonant states).
It also recalls the band structure of Ruelle resonances due to the Lie algebra commutation relations.
The section finishes with a restatement of
\cite[Lemma 4.2]{dfg}
emphasising a polynomial structure and which allows the inversion result for horosphere operators to be used in the presence of Jordan blocks.
Section~\ref{sec:quantum}
recalls the construction of various operators \`a la Vasy used to obtain the meromorphic extension of the resolvent of the Laplacian on symmetric tensors in 
\cite{h:quantum}.
It then characterises generalised quantum resonant states via their asymptotic structure.
Section~\ref{sec:poisson}
introduces boundary distributions which are the intermediary objects between quantum and classical resonant states and shows that the Poisson operator remains an isomorphism in the convex cocompact setting.
To finish, Section~\ref{sec:proof}
collects the results provided in the previous sections to succinctly prove 
Theorem~\ref{thm:CQCorrespondence}.

%\section{Symmetric Tensors}\label{sec:prelim}

\section{Symmetric Tensors}\label{sec:prelim}

\subsection{A single fibre}
Let 
$E$ 
be a vector space of dimension 
$n$ 
equipped with an inner product 
$g$.
Use
$g$
to identify 
$E$ 
with its dual space.
Let
$\ensemble{e_i}{1}{i}{n}$ 
be an orthonormal basis.
We denote by 
$\Sym^m E$ 
the 
$m$-fold 
symmetric tensor product of 
$E$. 
Elements are symmetrised tensor products
\begin{align}
u_1\sp \ldots \sp u_m 
:= 
\sum_{\sigma \in \Pi_m} u_{\sigma(1)}\otimes \ldots\otimes u_{\sigma(m)},
\qquad
u_i\in E
\end{align}
where 
$\Pi_m$ 
is the permutation group of 
$\{1,\dots,m\}$. 
By linearity, this extends the operation 
$\sp$ 
to a map from 
$\Sym^mE\times \Sym^{m'}E$.
Some notation for finite sequences is required for calculations with symmetric tensors, and which is used in
Lemma~\ref{lem:EquivarianceOfSections}.
Denote by 
$\sA^m$ 
the space of all sequences 
$K=k_1\dots k_m$ 
with 
$1\leq k_r\leq n$. 
We write 
$\co{k_r}{j}{K}$ 
for the result of replacing the 
$r$\textsuperscript{th} 
element of 
$K$ 
by 
$j$. 
We set
\begin{align}
e_K 
:= 
e_{k_1}\sp \ldots \sp e_{k_m} \in\Sym^m E,
\qquad
K=k_1\dots k_m \in \sA^m.
\end{align}

The inner product induces an inner product on 
$\Sym^mE$,
also denoted by
$g$,
defined by
\begin{align}
g( u_1\sp\ldots\sp u_m, v_1\sp\ldots\sp v_m )
:= 
\sum_{\sigma\in\Pi_m} g(u_1,v_{\sigma(1)})\ldots g(u_m,v_{\sigma(m)}),
\qquad 
u_{i},v_{i}\in E.
\end{align}
For 
$u\in E$, 
the metric adjoint of the linear map 
$u\sp : \Sym^mE\to\Sym^{m+1}E$ 
is the contraction 
$u \ip : \Sym^{m+1}E\to\Sym^mE$.
Contraction and multiplication with the metric 
$g$ 
define two additional linear maps $\trace$ and $\adjtrace$,
\begin{align}
\trace : \left\{	 \begin{array}{rcl}
			\Sym^m E & \to & \Sym^{m-2} E \\
			u & \mapsto & \sum_{i=1}^n   e_i \ip e_i \ip u
		\end{array}
		\right.
&&
\adjtrace : \left\{	 \begin{array}{rcl}
			\Sym^m E & \to & \Sym^{m+2} E \\
			u & \mapsto & \sum_{i=1}^n e_i \sp e_i \sp u
		\end{array}
		\right.
\end{align}
which are adjoint to each other. 
As the notation is motivated by standard notation from complex geometry, we will refer to these two operators as Lefschetz-type operators. 
Denote by
\begin{align}
\Sym^m_0 E : = \ker \left(\trace : \Sym^m E \to \Sym^{m-2} E\right)
\end{align}
the space of trace-free symmetric tensors of degree
$m$.

\subsection{Vector bundles}

The previous constructions may be performed using a Riemannian manifold's tangent bundle.
In view of
Section~\ref{sec:quantum},
consider
a Riemannian manifold 
$(\Xone,g)$ 
of dimension 
$n+1$
with Levi-Civita connection
$\n$. The rough Laplacian on
$\Sym^m \T\Xone$ 
is denoted
$\n^*\n$
(and equal to 
$-\tr_g \circ \n\circ \n$).

Let
$\ensemble{e_i}{0}{i}{n}$
be a local orthonormal frame.
The symmetrisation of the covariant derivative, called the symmetric differential, is
\begin{align}
\d : \left\{	 \begin{array}{rcl}
			C^\infty(\Xone ; \Sym^m \T \Xone) & \to & C^\infty(\Xone ; \Sym^{m+1} \T \Xone) \\
			u & \mapsto & \sum_{i=0}^n e_i\sp \n_{e_i} u
		\end{array}
		\right.
\end{align}
and its formal adjoint, called the divergence, is
\begin{align}
\div : \left\{	 \begin{array}{rcl}
			C^\infty(\Xone ; \Sym^{m+1} \T \Xone) & \to & C^\infty(\Xone ; \Sym^m \T \Xone) \\
			u & \mapsto & -\sum_{i=0}^n e_i \ip \n_{e_i} u
		\end{array}
		\right.
\end{align}
The two first-order operators behave nicely with the associated Lefschetz-type operators
$\adjtrace$ 
and 
$\trace$ 
giving the following commutation relations 
\cite[Equation 8]{hms}:
\begin{align}\label{eqn:HMScommutations}
[\trace,\div]=0=[\adjtrace,\d],
\quad
[\trace,\d]=-2\div,
\quad
[\adjtrace,\div]=2\d.
\end{align}

%\section{Hyperbolic Space}\label{sec:hspace}

\section{Hyperbolic Space}\label{sec:hspace}

We recall the hyperbolic space as a submanifold of Minkowski space, introducing structures present in this constant curvature case. Enumerate the canonical basis of 
$\Rono$ 
by 
$e_0, \dots e_{n+1}$ 
and provide 
$\Rono$ 
with the indefinite inner product
$\langle x, y \rangle := - x_0 y_0 + \sum_{i=1}^{n+1} x_i y_i.$
Hyperbolic space, 
$\Hn$, 
a submanifold of 
$\Rono$, 
is
$%\begin{align}
\Hn
:=
\setleft{x \in \Rono}{\langle x, x \rangle = -1 , x_0 >0 }
$ %\end{align}
supplied with the Riemannian metric, 
$g$, 
induced from restriction of 
$\langle \cdot, \cdot \rangle$,
and Levi-Civita connection
$\n$. 
The unit tangent bundle is
$%\begin{align}
S\Hn 
:= 
\setright{	
	(x,\xi) 	
}{	
	x\in\Hn,
	\xi\in\Rono, 
	\langle \xi, \xi \rangle = 1,
	\langle x, \xi \rangle=0
}.
$ %\end{align}
Define the projection
$%\begin{align}
\pi_S : S\Hn \to \Hn : (x,\xi) \mapsto x
$ %\end{align}
and denote by
\begin{align}
\varphi_t : \left\{	 \begin{array}{rcl}
			S\Hn & \to & S\Hn \\
			(x,\xi) & \mapsto & (x \cosh t + \xi \sinh t , x \sinh t + \xi \cosh t) 
		\end{array}
		\right.
\end{align}
the geodesic flow for 
$t\in\R$ 
with generator denoted 
$A$.
That is,
$%\begin{align}
A_{(x,\xi)} := (\xi, x).
$ %\end{align}
The tangent space
$\T S\Hn$
at
$(x,\xi)$
may be written
\begin{align}
\T_{(x,\xi)} S\Hn
:=
\setleft{
	(v_x, v_\xi) \in (\Rono)^2
}{
	\langle x ,v_x \rangle =
	\langle \xi, v_\xi \rangle =
	\langle x, v_\xi \rangle + \langle \xi , v_x \rangle =
	0
}.
\end{align}
It has a smooth decomposition, invariant under $\varphi_{t*}$,
$%\begin{align}
\T  S\Hn
=
E^n \oplus E^s \oplus E^u
$ %\end{align}
where
\begin{align}
{E^n}_{(x,\xi)}
&:=	\setleft{
	(v_x, v_\xi)
	}{
	(v_x, v_\xi) \in \mathrm{span} \{ (\xi, x) \}
	},
\\
{E^s}_{(x,\xi)}
&:=	\setleft{
	(v, -v)
	}{
	\langle x, v \rangle =
	\langle \xi, v \rangle =
	0
	},
\\	
{E^u}_{(x,\xi)}
&:=	\setleft{
	(v, v)
	}{
	\langle x, v \rangle =
	\langle \xi, v \rangle =
	0
	}	
\end{align}
are respectively called the neutral, stable, unstable bundles 
(of $\varphi_{t*}$).
(The latter two also being tangent to the positive and negative horospheres.)
The dual space has a similar decomposition
$%\begin{align}
\T^* S\Hn
=
E^{*n} \oplus E^{*s} \oplus E^{*u}
$ %\end{align}
where 
$E^{*n}, E^{*s}, E^{*u}$
are respectively the dual spaces to
$E^n, E^u, E^s$. 
(They are the neutral, stable, unstable bundles of $\varphi_{-t}^*$.)
Explicitly
\begin{align}
{E^{*n}}_{(x,\xi)}
&:=	\setleft{
	(v_x, v_\xi)
	}{
	(v_x, v_\xi) \in \mathrm{span}\{ (\xi, x) \}
	},
\\
{E^{*s}}_{(x,\xi)}
&:=	\setleft{
	(v, v)
	}{
	\langle x, v \rangle =
	\langle \xi, v \rangle =
	0
	},
\\	
{E^{*u}}_{(x,\xi)}
&:=	\setleft{
	(v, -v)
	}{
	\langle x, v \rangle =
	\langle \xi, v \rangle =
	0
	}	
\end{align}
so we have canonical identifications
$E^{n*} \simeq E^n \simeq \mathrm{span} \{ A \}$,
and
$E^{*s} \simeq E^u$,
and
$E^{*u} \simeq E^s$.
Consider the pullback bundle
$
\pi_S^* \T \Hn \to S\Hn
$
equipped with the pullback metric, also denoted
$g$.
Define
\begin{align}
\E :=	\setleft{
	(x,\xi,v)\in S\Hn \times \T_x\Hn
	}{
	\langle \xi, v \rangle =0
	}
\end{align}
and
$%\begin{align}
\F :=	\setleft{
	(x,\xi,v)\in S\Hn \times \T_x\Hn
	}{
	v \in \mathrm{span} \{ \xi \}
	}
$ %\end{align}
so that
$%\begin{align}
\pi_S ^* \T \Hn = \E \oplus \F.
$ %\end{align}
Appealing to 
Section~\ref{sec:prelim},
we obtain bundles
we obtain the bundles
$\Sym^m\E^*$
above
$S\Hn$
and Lefschetz-type operators 
$\adjtrace,\trace$.

There are canonical identifications from $\E$ to both $E^s$ and $E^u$, which we denote by $\theta_\pm$:
\begin{align}
\begin{array}{ l }
\theta_+ : \E \to E^s: \\
\theta_- : \E \to E^u:
\end{array}
\qquad
{\theta_\pm}_{(x,\xi)} (v) : = (v , \mp v).
\end{align}

\subsection{Isometry group}
The group 
$\SO(1,n+1)$ 
of linear transformations of 
$\Rono$ 
preserving 
$\langle\cdot,\cdot\rangle$ 
provides the group 
\begin{align}
G:=\SO_0(1,n+1),
\end{align}
the connected component in 
$\SO(1,n+1)$ 
of the identity. 
Denote by 
$\gamma\cdot x$,
multiplication of 
$x\in\Rono$
by
$\gamma\in G$.
Denote by 
$E_{ij}$ 
is the elementary matrix such that 
$E_{ij} e_k = e_i\delta_{jk} $
and define the following matrices
\begin{align}
R_{ij} :=E_{ij} - E_{ji},
\qquad
P_k :=E_{0k} + E_{k0}
\end{align}
for
$1\le i,j,k\le n+1$. The Lie algebra,
$\mathfrak{g}$,
of
$G$
is then identified with
$%\begin{align}
\mathfrak{g} 
= \mathfrak{k} + \mathfrak{p}
$ %\end{align}
where
\begin{align}
\mathfrak{k} := \mathrm{span} 
						\ensemble{ R_{ij} }{1}{i,j}{n+1}	
			\simeq \mathfrak{so}_{n+1}, 
\qquad
\mathfrak{p} := \mathrm{span}  
						\ensemble{ P_k }{1}{k}{n+1}.
\end{align}
An alternative description of
$\mathfrak{g}$
may be obtained by defining
\begin{align}
A:=P_{n+1},
\qquad
\Npm_k :=P_k\pm R_{n+1, k}
\end{align}
for 
$1\le k \le n$. Then
$%\begin{align}
\mathfrak{g}
= \mathfrak{m} + \mathfrak{a} + \mathfrak{n}_+ + \mathfrak{n}_-
$ %\end{align}
where
$\mathfrak{a}:= \mathrm{span}\{ A \}$
and
\begin{align}
\mathfrak{m} := \mathrm{span}
						\ensemble{ R_{ij} }{1}{i,j}{n}	
			\simeq \mathfrak{so}_{n}, 
\qquad
\mathfrak{n}_\pm := \mathrm{span}  
						\ensemble{ \Npm_k }{1}{k}{n}.
\end{align}
The matrices introduced enjoy the following commutator relations, for 
$1\le i,j \le n$
\begin{align}\label{eq:CommutationRelationsAN}
[A , \Npm_i] = \pm \Npm _i, 
\qquad
[\Npm_i , \Npm_j] = 0,
\qquad
[\Np_i , \Nm_j ] = 2 A\delta_{ij} + 2 R_{ij},
\end{align}
while
\begin{align}
[R_{ij},A]=0, 
\qquad
[R_{ij}, \Npm_k ] = \Npm_{i} \delta_{jk}  -  \Npm_j \delta_{ik}.
\end{align}
\begin{remark}
If we define
$\mathfrak{a}^\perp:= \mathfrak{p} / \mathfrak{a}$ 
whence
$\mathfrak{a}^\perp \simeq \ensemble{P_k}{1}{k}{n}$
then we may obtain identifications
\begin{align}
\theta_\pm : \mathfrak{a}^\perp \to \mathfrak{n}_\pm : P_k \mapsto \Npm_k.
\end{align}
\end{remark}
Elements of the Lie algebra 
$\mathfrak{g}$ 
are identified with left invariant vector fields on 
$G$. 
The Lie algebras 
$\mathfrak{k}, \mathfrak{m}$ 
give Lie groups 
$K$, $M$ 
considered subgroups of 
$G$. 
Now 
$G$ 
acts transitively on both
$\Hn$ 
and 
$S\Hn$ 
and the respective isotropy groups, for
$e_0\in\Hn$
and
$(e_0,e_{n+1})\in S\Hn$,
are precisely
$K$ 
and 
$M$.
Define projections
\begin{align}
\pi_K : G \to \Hn &: \gamma \mapsto \gamma \cdot e_0, \\
\pi_M : G \to S\Hn &: \gamma \mapsto (\gamma \cdot e_0 , \gamma \cdot e_{n+1} ).
\end{align}
As 
$A$ 
commutes with 
$M$, 
it descends to a vector field on 
$S\Hn$ 
via 
$\pi_{M*}$. 
It agrees with the generator of the geodesic flow justifying the notation. Similarly, the spans of 
$\ensemble{\Np_k}{1}{k}{n}$ 
and 
$\ensemble{\Nm_k}{1}{k}{n}$ 
are each stable under commutation with 
$M$ 
and via 
$\pi_{M*}$ 
are respectively identified with the stable and unstable subbundles 
$E^s, E^u$.

\subsection{Equivariant sections}

It is clear that distributions on $S\Hn$ may be considered  as distributions on $G$ which are annihilated by $M$. We denote such distributions
\begin{align}
\Dist(G)/\mathfrak{m} :=	\setleft{
					u \in \Dist(G)
					}{
					R_{ij}u=0,
					1\le i,j\le n
					}.
\end{align}
This is true for more general sections, in particular we have
\begin{lemma}\label{lem:EquivarianceOfSections}
Sections 
$\Dist(S\Hn ; \Sym^m \E^*)$
are equivalent to equivariant sections
\begin{align}
\Dist(G ; \Sym^m \R^n)/\mathfrak{m}
:=
\setleft{
		\sum_{K\in\sA^m} u_K e_K 
	}{
		R_{ij} u_K = \sum_{\ell=1}^k	\left(  
								u_\co{k_\ell}{i}{K}\delta_{jk_\ell}  - u_\co{k_\ell}{j}{K} \delta_{ik_\ell}
								\right)
			, 1\le i,j\le n
	}.
\end{align}
\end{lemma}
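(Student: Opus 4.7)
The strategy is to realise $\Sym^m\E^*$ as the vector bundle associated to the principal $M$-bundle $\pi_M\colon G\to S\Hn=G/M$ via a representation of $M$, and then translate the equivariance condition at the group level into an infinitesimal condition at the Lie algebra level involving only $\mathfrak{m}$.

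First I would fix the identification of fibres. The isotropy $M\subset K$ fixes $(e_0,e_{n+1})$ and acts on $\mathrm{span}\{e_1,\dots,e_n\}\simeq\R^n$ by the standard representation of $M\simeq\SO_n$, because $\E_{(e_0,e_{n+1})}=\mathrm{span}\{e_1,\dots,e_n\}$ by the very definition of $\E$. Using the induced metric to identify $\E$ with $\E^*$ and extending to symmetric powers, we obtain a representation $\rho\colon M\to\GL(\Sym^m\R^n)$ such that
\begin{align}
\Sym^m\E^* \;\simeq\; G\times_{\rho} \Sym^m\R^n.
\end{align}
Sections of this associated bundle are in bijection with right-$M$-equivariant maps $\tilde u\colon G\to\Sym^m\R^n$, i.e.\ $\tilde u(\gamma m)=\rho(m)^{-1}\tilde u(\gamma)$. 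The same is true for distributional sections, since $\pi_M$ is a smooth fibration and the identification is done via a smooth trivialisation along local sections.

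Next I would pass to the infinitesimal equivariance. Since elements of $\mathfrak{g}$ act on $C^\infty(G)$ as left-invariant vector fields, differentiating the equivariance relation along $\exp(tX)$ with $X\in\mathfrak{m}$ yields
\begin{align}
X\tilde u \;=\; -d\rho(X)\,\tilde u,\qquad X\in\mathfrak{m}.
\end{align}
Applied to the generators $R_{ij}$ with $1\le i,j\le n$, which span $\mathfrak{m}$, this produces the $n^2$ scalar relations claimed in the lemma, once we compute $d\rho(R_{ij})$. From $R_{ij}e_k=e_i\delta_{jk}-e_j\delta_{ik}$ and the Leibniz rule for the symmetric product, we obtain
\begin{align}
d\rho(R_{ij}) e_K \;=\; \sum_{\ell=1}^m \bigl(e_{\co{k_\ell}{i}{K}}\delta_{jk_\ell}-e_{\co{k_\ell}{j}{K}}\delta_{ik_\ell}\bigr),
\end{align}
and a short reindexing (changing the summation variable from the index $K$ being acted upon to the index labelling the output $e_L$) transforms the identity $R_{ij}\tilde u=-d\rho(R_{ij})\tilde u$ into the stated coefficient-wise relation for $u_K$.

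Conversely, I would check that any distribution $\sum_K u_K e_K\in\Dist(G;\Sym^m\R^n)$ satisfying the listed relations defines an honest $M$-equivariant distribution, hence descends to a section of $\Sym^m\E^*$. Because $M$ is connected, infinitesimal equivariance with respect to the spanning set $\{R_{ij}\}$ is equivalent to global $M$-equivariance, so this reverse direction is automatic. The only real task is the careful bookkeeping in the computation of $d\rho(R_{ij})$ and the reindexing of sums; there is no analytical obstacle, and the main care needed is to track signs and which index position is being modified in $\co{k_\ell}{i}{K}$.
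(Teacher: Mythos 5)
Your proposal is correct and follows essentially the same route as the paper: both characterise sections of $\Sym^m\E^*$ by the infinitesimal $M$-equivariance condition and then compute the action of the spanning elements $R_{ij}\in\mathfrak{m}$ via the Leibniz rule on $e_K$. The paper abbreviates by reducing to $m=1$ and omitting the associated-bundle and connectedness remarks, but the underlying computation is identical.
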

\begin{proof}
It suffices to consider the case 
$m=1$. 
Demanding that 
$u=\sum_{k=1}^n u_k e_k$ 
corresponds to a section of 
$\E^*$ 
requires precisely that
\begin{align}
0 = R_{ij} u 
 = \sum_{k=1}^n (R_{ij} u_k) e_k + u_k (R_{ij} e_k) %\\
 = \sum_{k=1}^n (R_{ij} u_k) e_k + u_k ( e_i \delta_{jk} - e_j\delta_{ik} )
\end{align} 
for
$1\le i,j\le n$.
Applying
$e_k\ip$ 
to this equation recovers
$
R_{ij} u_k = u_i \delta_{jk} - u_j \delta_{ik}. %\qedhere
$
\end{proof}
A similar statement may be made for other (not necessarily symmetric) tensor bundles of
$\E$.

\subsection{Differential operators on $\E$}

We introduce several operators on (sections of tensor bundles of)
$\E$.
As
$\E$
may be viewed as a subbundle of
$\Rono$
above
$S\Hn$,
let
$\n^{\mathrm{flat}}$
denote the induced connection (upon projection onto $\E$ of the flat connection on $\Rono$). Now
\begin{align}
\n^{\mathrm{flat}} : \Dist(S\Hn ; \E^*) \to \Dist(S\Hn ; \T^*S\Hn\otimes \E^*)
\end{align}
however if we restrict to differentiating in either only the stable or only the unstable bundles 
$E^s, E^u$,
via composition with
$\theta_\pm$,
we obtain horosphere operators
$
\n_\pm := \n^{\mathrm{flat}}_{\theta_\pm}
$
and in general we obtain
\begin{align}
\n_\pm : \Dist(S\Hn ; \otimes^m\E^*) \to \Dist(S\Hn ; \otimes^{m+1}\E^*).
\end{align}
Symmetrising this operator we get the (positive and negative) horosphere symmetric derivatives and their divergences
\begin{align}
\d_\pm : \Dist(S\Hn ; \Sym^m\E^*) \to \Dist(S\Hn ; \Sym^{m+1}\E^*), \\
\div_\pm : \Dist(S\Hn ; \Sym^{m+1}\E^*) \to \Dist(S\Hn ; \Sym^m\E^*),
\end{align}
as well as the horophere Laplacians 
$\Delta_\pm := [\div_\pm, \d_\pm]$.

Considering these operators acting on equivariant sections of the corresponding vector bundles we have
\begin{align}
\n_\pm = \sum_{i=1}^n e_k \otimes \Lie_{ \Npm_k }
:
\Dist(G ; \otimes^m \R^n ) / \mathfrak{m}
\to
\Dist(G ; \otimes^{m+1} \R^n ) / \mathfrak{m}
\end{align}
where $\Lie$ is the Lie derivative.
(The appearance of merely the Lie derivative is because 
$\n_\pm$ 
uses 
$\n^{\mathrm{flat}}$ 
and 
$\Npm_i e_j = -(e_0+e_{n+1})\delta_{ij} \not\in \R^n$ 
for 
$1\le i,j\le n$.)
Similarly
\begin{align}
\d_\pm = \sum_{k=1}^n e_k \sp \Lie_{\Npm_k}, 
\qquad
\div_\pm = -\sum_{k=1}^n e_k \ip \Lie_{\Npm_k},
\qquad
\Delta_\pm = -\sum_{k=1}^n \Lie_{\Npm_k} \Lie_{\Npm_k}
\end{align}
on 
$
\Dist(G ; \Sym^m \R^n ) / \mathfrak{m}.
$

Continuing to consider equivariant sections we note that
$\Lie_A$
acts as a first order differential operator 
$\Dist(G ; \Sym^m \R^n ) / \mathfrak{m}$
due to the commutator relations
($A$ commutes with $M$).
As
$Ae_i = 0$,
there will be no ambiguity in denoting this operator simply
$A$.
From the perspective of sections directly on $\Sym^m\E$ we have
\begin{align}
A : = (\pi_S^*\n)_{A} : \Dist( S\Hn ; \Sym^m \E^*) \to \Dist ( S\Hn ; \Sym^m \E^*)
\end{align}
since 
$\pi_{S*}A=\xi$ 
at 
$(x,\xi)\in S\Hn$.

There are numerous useful relations between these operators. 
On 
$\Dist(S\Hn)$ 
the operators 
$(\n_\pm)^m$
and
$(\d_\pm)^m$
agree since
$[\Npm_i,\Npm_j]=0$.
As in 
Section~\ref{sec:prelim}, these operators have the same computation relations as given in
\eqref{eqn:HMScommutations}.
Moreover, due to the first commutation relation presented in 
\eqref{eq:CommutationRelationsAN},
these operators have simple commutation relations with
$A$
\begin{align}
[A , \d_\pm] = \pm \d_\pm,
\qquad 
[A , \div_\pm] = \pm \div_\pm,
\qquad 
[A , \Delta_\pm] = \pm 2\Delta_\pm.
\end{align}

\subsection{Several operators on hyperbolic space}

The metric on 
$\T\Hn$
allows the standard construction of the rough Laplacian
\begin{align}
\n^*\n : C^\infty(\Hn ; \Sym^m \T^*\Hn) \to C^\infty(\Hn ; \Sym^m \T^*\Hn).
\end{align}
Another common Laplacian on symmetric tensors is the Lichnerowicz Laplacian 
\cite{hms}.
For a general Riemannian manifold, the Lichnerowicz Laplacian is given by
$\n^*\n+q(\Riemann)$
where 
$q(\Riemann)$ 
is a curvature correction of zeroth order. On
$\Hn$,
the curvature operator takes the constant value
$q(\Riemann)=-m(n+m-1)$.
The divergence is
\begin{align}
\div : C^\infty(\Hn ; \Sym^m \T^*\Hn) \to C^\infty(\Hn ; \Sym^{m-1} \T^*\Hn)
\end{align}
and we continue to use the notation
$\adjtrace,\trace$
for the Lefschetz-type operators
associated with 
$\Sym^m \T^*\Hn$.

\subsection{Conformal boundary}

Hyperbolic space is projectively compact, and we identify the boundary of its compactification with the forward light cone
$%\begin{align}
	\setright{
	(t, ty)
	}{
	t\in\R^+,
	y\in\Sn
	}
	\subset
	\Rono.
$ %\end{align}
%Although this space most naturally possesses only a conformal metric, we distinguish the subset $\{t=1\}$ in order to give it the standard metric of $\Sn$. 
Now
$x\pm\xi$
belongs to this light cone for
$(x,\xi)\in S\Hn$
and this defines maps
\begin{align}
\Phi_\pm : S\Hn \to \R^+,
\qquad
B_\pm : S\Hn \to \Sn,
\end{align}
by declaring
$%\begin{align}
x\pm \xi = \Phi_\pm (x,\xi) (1, B_\pm(x,\xi)).
$ %\end{align}
The Poisson kernel is
\begin{align}
P : \left\{	 \begin{array}{rcl}
			\Hn \times \Sn & \to & \R^+ \\
			(x,y) & \mapsto & - \langle x , e_0 + y \rangle^{-1}
		\end{array}
		\right.
\end{align}
which permits the definition of
\begin{align}
\xi_\pm  : \left\{	 \begin{array}{rcl}
			\Hn \times \Sn & \to & S\Hn \\
			(x,y) & \mapsto & (x , \mp x \pm P(x,y) ( e_0 + y) )
		\end{array}
		\right.
\end{align}
This gives an inverse to 
$B_\pm(x,\cdot)$
in the sense that
$B_\pm (x , \xi_\pm (x,\nu) ) = \nu$
(implying that
$B_\pm$
is a submersion).
Moreover,
$%\begin{align}
\Phi_\pm (x , \xi_\pm (x,y) ) = P(x,y)
$ %\end{align}
The isometry group
$G$
acts on conformal infinity. There are maps
\begin{align}
T : G\times \Sn \to \R^+,
\qquad
U : G\times \Sn \to \Sn,
\end{align}
defined by
$%\begin{align}
\gamma\cdot(1,y)
=
T_\gamma(y)(1,U_\gamma(y)).
$ %\end{align}
Useful formulae are
\begin{align}
A \circ \Phi_\pm = \pm \Phi_\pm,
\qquad
 \Npm_k ( \Phi_\pm \circ \pi_M) = 0,
\qquad
B_\pm = \lim_{t\to\pm\infty} \pi_S \circ \varphi_t,
\end{align}
and
\begin{align}\label{eq:GEquivarianceOfTandU}
B_\pm (\gamma \cdot (x,\xi) ) = U_\gamma ( B_\pm (x,\xi)),
\qquad
\Phi_\pm ( \gamma \cdot (x,\xi)) = T_\gamma ( B_\pm (x,\xi)) \Phi_\pm (x,\xi).
\end{align}
We introduce the map
\begin{align}
\tau_\pm : \left\{	 \begin{array}{rcl}
			\E_{(x,\xi)} & \to & \T_{y := B_\pm(x,\xi)}\Sn \\
			v & \mapsto & v + \langle v, e_0 \rangle e_0 - \langle v, y\rangle y
		\end{array}
		\right.
\end{align}
which isometrically identifies 
$\E_{(x,\xi)}$ 
with
$\T_{B_\pm(x,\xi)} \Sn$.
It has an inverse
\begin{align}
{\tau_\pm}^{-1} : \left\{	 \begin{array}{rcl}
			\T_{B^\pm(x,\xi)}\Sn & \to & \E_{(x,\xi)} \\
			\zeta & \mapsto & \zeta + \langle \zeta , x \rangle (x\pm \xi)
		\end{array}
		\right.
\end{align}
and the adjoint of
$\tau_\pm$
is denoted
${\tau_\pm}^*$.
Restricting our attention to $\tau_-$ we note the following equivariance under $G$,
\cite[Equation 3.33]{dfg}
\begin{align}\label{eq:GEquivarianceOfTauMinus}
\left(		
	{\tau_-}_{\gamma\cdot (x,\xi)}	
\right)^{-1} 
\left( 	
	{U_{\gamma *} }_{| B_-(x,\xi)}	
	(\zeta)	
\right)
=
\frac{1}{T_\gamma ( B_-(x,\xi) ) }	
\,	
\gamma 
\cdot 
\left(  
	\left(
		{\tau_-}_{(x,\xi)} 
	\right)^{-1}
	(\zeta) 
\right)
\end{align}
for 
$\zeta\in \T_{B^\pm(x,\xi)} \Sn$.
%\red{(It takes four lines to see that $\tau^\pm \mathcal{A}^\pm = 1$, and for this you use that $\tau^\pm$ is effectively projecting away the part that is in span of $e_0$ and $\nu$, in particular you use $x\pm\xi$ is in this span. It is also a three line calculation to see that $\tau^\pm$ is an isometry.)}
The identification offered by $\tau_-$ permits a second important identification of distributions in the kernel of both 
$A$ 
and 
$\nm$ 
with boundary distributions. Define the operator 
\begin{align}
\mathcal{Q}_- : \left\{	 \begin{array}{rcl}
			\Dist(\Sn ; \otimes^m \T^* \Sn) & \to & \Dist(S\Hn ; \otimes^m \E^*) \\
			\omega & \mapsto & (\otimes^m (\tau_-^*) ). \omega \circ B_-
		\end{array}
		\right.
\end{align}
which restricts to a linear isomorphism
\begin{align}
\mathcal{Q}_- : \Dist(\Sn ; \Sym^m_0 \T^*\Sn ) \to \Dist( S\Hn ; \Sym^m_0 \E^* ) \cap \ker A \cap \ker \nm.
\end{align}
Moreover, suppose we define
\begin{align}
u:= (\Phi_-)^\lambda \mathcal{Q}_- \omega,
\qquad
\lambda\in\C,
\omega\in\Dist(\Sn ; \Sym^m_0 \T^*\Sn ),
\end{align}
then $u$ enjoys, due to
\eqref{eq:GEquivarianceOfTandU}
and
\eqref{eq:GEquivarianceOfTauMinus},
the following equivariance property for $\gamma\in G$
\begin{align}
\left(
	\gamma^* 
	(\Phi_-)^\lambda 
	\mathcal{Q}_- 
	\omega
\right)_{(x,\xi)} 
( \eta_1,\dots,\eta_m)
=
%(\Phi_-)^\lambda_{(x,\xi)} ((T_\gamma)^{\lambda+m} U_\gamma^*\omega)_{B_-(x,\xi)} ( {\tau_-}_{(x,\xi)}\eta_1, \dots , {\tau_-}_{(x,\xi)}\eta_m)
(\Phi_-)^\lambda_{(x,\xi)} 
\left(
	(T_\gamma)^{\lambda+m} 
	U_\gamma^*
	\omega
\right)_{B_-(x,\xi)} 
( {\tau_-} \eta_1, \dots , {\tau_-} \eta_m)
\end{align}
where $\eta_i \in \E_{(x,\xi)}$.
So 
$\gamma^* u=u$
if and only if, for 
$y\in\Sn$,
\begin{align}\label{eq:GEquivarianceOfOmega}
U_\gamma^* \omega(y) = T_\gamma (y)^{-\lambda-m} \omega(y).
\end{align}

\subsection{Upper half-space model}

Hyperbolic space is diffeomorphic to the upper half-space model
$\Un:= \R^+\times \R^n$.
We take its closure
$\overline \Un$
by considering
$\Un\subset \R^{n+1}$.
Using coordinates
$x=(\rho,y)$
for
$\rho\in\R^+$,
$y\in\R^n$
the metric takes the form
\begin{align}
g = \frac{d\rho^2 + h}{\rho^2}
\end{align}
where
$h$
is the standard metric on
$\R^n$.

In this model of hyperbolic space, the map $\tau_-^{-1}$ has been explicitly calculated in
\cite[Appendix A]{g-moroianu-park}
under the guise of parallel transport in the 0-calculus of Melrose.
For
$y'\in \R^n$,
$x=(\rho,y)\in \Un$,
we write
$\xi_-:=\xi_-(x,y')$
and
$r:=y-y'$.
Then
\begin{align}
\tau_-^{-1} : \left\{	 \begin{array}{rcl}
			\T_{y'} \R^n & \to & \E_{(x,\xi_-)} \\
			\partial_{y_i} & \mapsto & \rho\left( \frac{-2\rho^2 r_j}{\rho^2+r^2} \drr + \sum_{j=1}^n \left(\delta_{ij} - \frac{2r_ir_j}{\rho^2+r^2} \right) \partial_{y_j} \right)
				\end{array}
		\right.
\end{align}
Therefore 
$\tau_-^* dy_i=\rho^{-1}dy_i$
if 
$r=0$ 
and in general, for fixed 
$y'$ 
and variable 
$x$,
\begin{align}\label{eqn:evenexpansiontau}
\tau_-^* dy_i = \rho^{-1} \left( b \, \rho d\rho + \sum_{j=1}^n b_{ij} dy_j \right)
\end{align}
for
$b,b_{ij} \in C^\infty_\even(\overline\Un)$
(that is, 
$b,b_{ij}$ are functions of 
$\rho^2$
rather than simply
$\rho$).

The Poisson kernel reads (continuing to use the notation from the previous paragraph)
\begin{align}
P(x,y') = \frac{\rho}{\rho^2+r^2}(1+|y|^2)
\end{align}
and so 
$\rho^{-1}P(x,y')$
is even in 
$\rho$
and, for fixed
$y'$,
is smooth on 
$\overline\Un$ 
away from
$x= (0,y')$.

\subsection{Convex cocompact quotients}

Consider a discrete subgroup 
$\Gamma$
of
$G=\SO_0(1,n+1)$
which does not contain elliptic elements.
Denote by 
$K_\Gamma$
the limit set of
$\Gamma$.
Via the compactification
$\overline\Hn = \Hn \sqcup \Sn$,
the limit set is the
the set of accumulation points of an arbitrary 
$\Gamma$-orbit,
and is a closed subset of
$\Sn$.
The hyperbolic convex hull of all geodesics in
$\Hn$
whose two endpoints both belong to $K_\Gamma$
is termed the convex hull.
The quotient of the convex hull by 
$\Gamma$
gives the convex core of 
$\Gamma\backslash\Hn$,
that is,
the smallest convex subset of 
$\Gamma\backslash\Hn$
containing all closed geodesics of
$\Gamma\backslash\Hn$.
The group
$\Gamma$ 
is called convex cocompact if
its associated convex core is compact.

Let 
$\Gamma$ 
be convex cocompact and define 
$\Xone:=\Gamma \backslash \Hn$
denoting the canonical projection by
$\pi_\Gamma : \Hn \to \Xone$.
Then
$S\Xone = \Gamma \backslash S\Hn$
(with canonical projection also denoted by
$\pi_\Gamma$).
The constructions of the previous subsections descend to constructions on
$\Xone$
and
$S\Xone$.

Furthermore, denote by
$\Omega_\Gamma\subset \Sn$
the discontinuity set of
$\Gamma$.
Then
$\Omega_\Gamma=\Sn \backslash K_\Gamma$
and
$\overline \Xone = \Gamma \backslash (\Hn \sqcup \Omega_\Gamma)$.
Denote by
$\delta_\Gamma$
the Hausdorff dimension of the limit set
$K_\Gamma$.

We introduce the outgoing tail 
$K_+\subset S\Xone$ as
$%\begin{align}
K_+ 
:=
\pi_\Gamma \left( 
				B_-^{-1} \left(
							K_\Gamma
						\right)
			\right)
$ %\end{align}
and remark that this may be interpreted as the set of points 
$(x,\xi)\in S\Xone$
such that
$\pi_S(\varphi_t(x,\xi))$
does not tend to
$\partial\overline\Xone$
as
$t\to-\infty$.
Using the outgoing tail, we define the following restriction of the unstable dual bundle
$%\begin{align}
E_+^* :=  \left. E^{*u} \right|_{K_+}.
$ %\end{align}

%\section{Ruelle Resonances}\label{sec:ruelle}

\section{Ruelle Resonances}\label{sec:ruelle}

The operator
$A$
acts on 
$\Sym^m\E^*$ 
above 
$S\Xone$. 
For 
$\lambda\in \C$
with
$\Re\lambda>0$,
the operator
$(A+\lambda)$
has an inverse acting on
$L^2(S\Xone;\Sym^m\E^*)$.
By
\cite{dyatlov-g},
this inverse admits a meromorphic extension to
$\C$
as a family of bounded operators
\begin{align}
\Resolvent{A,m}{\lambda} : C_c^\infty(S\Xone ; \Sym^m\E^*) \to \Dist(S\Xone ; \Sym^m\E^*).
\end{align}
Near a pole 
$\lambda_0$, 
called a Ruelle resonance 
(of tensor order $m$),
the resolvent may be expressed as
\begin{align}
\Resolvent{A,m}{\lambda} 
	= 	\ResolventHol{A,m}{\lambda} 
		+ \sum_{j=1}^{J(\lambda_0)} 	\frac{(-1)^{j-1}(A+ \lambda_0)^{j-1} \Projector{A,m}{\lambda_0}}{(\lambda-\lambda_0)^{-j}}
\end{align}
where the image of the finite rank projector 
$\Projector{A,m}{\lambda_0}$ 
is called the space of generalised Ruelle resonant states 
(of tensor order $m$). 
It is denoted
\begin{align}
\Resonant{A,m}{}{\lambda_0}
&:=
\Im \left( \Projector{A,m}{\lambda_0} \right) \\
&\phantom{:}=
\setright{
	u \in \Dist (S\Xone ; \Sym^m \E^*) 
	}{
	\supp(u)\subset K^+ 
	,\, 
	\WF(u) \subset E_+^* 
	,\, 
	(A+\lambda_0)^{J(\lambda_0)}u=0
	}
\end{align}
(This characterisation in terms of support and wavefront properties being given in 
\cite{dyatlov-g}.)
We filter this space by declaring 
\begin{align}
\Resonant{A,m}{j}{\lambda_0}
:=
\setleft{
u \in \Resonant{A,m}{}{\lambda_0} 
}{
(A+ \lambda_0)^j u =0
}
\end{align}
saying that such states are of Jordan order (at most) 
$j$. 
Then
\begin{align}
\Resonant{A,m}{}{\lambda_0} = \cup_{j \ge1} \Resonant{A,m}{j}{\lambda_0}
\end{align}
and the space of Ruelle resonant states is 
$\Resonant{A,m}{1}{\lambda_0}$.

\subsection{Band structure}
Consider now 
$A$ 
acting on 
$\Sym^0\E^*$.
Let 
$\lambda_0$ 
be a Ruelle resonance 
(of tensor order $0$)
and consider (a non-zero) 
$ u\in \Resonant{A,0}{}{\lambda_0}$.
As Ruelle resonances
(of arbitrary tensor order)
are contained in
$\setleft{ \lambda \in \C }{ \Re\lambda\le0 }$,
the commutator relation
$[A,\dm]=-\dm$
implies that
there exists
$m\in\N_0$
such that
$(\dm)^m u \neq 0$
and
$(\dm)^{m+1} u =0$.
We say that 
$u$ 
is in the 
$m$\textsuperscript{th}
band. Precisely, we define
\begin{align}
\V{m}{j}{\lambda_0}
:=
\setleft{
	u \in \Resonant{A,0}{j}{\lambda_0}
	}{
	u \in \ker (\dm)^{m+1}
	}
\end{align}
The
$m$\textsuperscript{th}
band may then be considered the quotient
$\V{m}{j}{\lambda_0} / \V{m-1}{j}{\lambda_0}$
whence
\begin{align}\label{eq:FilterRuelleIntoBands}
\Resonant{A,0}{j}{\lambda_0} 
&= 
\bigoplus_{m\in \N_0}				\left( \V{m}{j}{\lambda_0} / \V{m-1}{j}{\lambda_0} \right).
\end{align}
Propositions~\ref{prop:IdentifyBandsWithSymmetricPowerOfE} and~\ref{prop:lambda=m-case} identify these bands with Ruelle resonances of tensor order $m$. This identification requires an inversion of horosphere operators presented in 
\cite[Section 4.3]{dfg}.
Specifically, the following lemma is a restatement of the final calculations performed in said section using the notation of the current article.
\begin{lemma}\label{lem:HorocycleInversion}
Consider 
$u\in \Dist(S\Hn; \Sym^m \E)\cap \ker (\nm)$ 
decomposed such that 
$u=\sum_{k} \adjtrace^k u^{(m-2k)}$ 
for 
$u^{(m-2k)} \in \Dist(S\Hn; \Sym^{m-2k}_0 \E)\cap \ker (\nm)$.
Set
$r:=m-2k$.
Then on $u^{(m-2k)}$,
\begin{align}
(\dm)^{m} (\Deltap)^k (\divp)^r
=
\adjtrace^k P_{r,k}(A)
\end{align}
where $P_{r,k}(A)$ is the following polynomial
\begin{align}
P_{r,k}(A)
=
2^{k+r} m! (r!)^2
\prod_{j=1}^k	
(A+r+j-1)(-2A+(n-2j))
\prod_{j=1}^r
(A-n-j+2).
\end{align}
\end{lemma}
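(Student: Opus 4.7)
The identity is purely algebraic: both sides are operators applied to $v := u^{(m-2k)}$, which is trace-free of degree $r = m-2k$ with $\nm v = 0$. Consequently $\trace v = 0$, $\dm v = 0$, and $\divm v = 0$. My plan is to commute the $(\dm)^m$ factor leftward through $(\Deltap)^k (\divp)^r$ using the Lie-algebraic commutation relations recorded in \eqref{eq:CommutationRelationsAN} and \eqref{eqn:HMScommutations}, together with $[A, \dm] = -\dm$, $[A, \divp] = +\divp$, $[A, \Deltap] = 2\Deltap$. The error terms from each commutation should be of two kinds: scalar (or $\adjtrace$-valued) polynomials in $A$, which will accumulate into the factors of $P_{r,k}(A)$; and residual operators containing $\trace$, $\dm$, $\divm$, or $\nm$ which annihilate $v$ by the preceding observations.

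I would organise the computation in two stages. In \emph{Stage A}, I move $(\dm)^r$ past $(\divp)^r$. The key ingredient is the commutator $[\dm, \divp]$, which (after using $[\Np_i, \Nm_j] = 2A\delta_{ij} + 2R_{ij}$ and Lemma~\ref{lem:EquivarianceOfSections} to handle $R_{ij}$ on equivariant sections) simplifies on trace-free $v$ with $\divm v = 0$ to a zeroth-order operator linear in $A$. A single application of $[\dm, \divp]$ against a symmetric tensor of intermediate rank $s$ picks up one factor of the form $2(A - n - s + 2)$; iterating $r$ times while tracking how the rank decreases yields $(r!)^2 \, 2^r \, \prod_{j=1}^{r} (A - n - j + 2) \cdot v$. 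In \emph{Stage B}, the remaining $(\dm)^{2k}$ must be pushed through $(\Deltap)^k$. Each crossing of a pair $\dm \cdot \dm$ through one $\Deltap = [\divp, \dp]$ produces, via the identities of Stage A applied on the opposite side, both a pair of linear factors $(A + r + j -1)(-2A + n - 2j)$ and one factor of $\adjtrace$ (coming from the interaction of $\dp$ with $\dm$, paralleling $[\adjtrace, \divp] = 2\dp$). After $k$ iterations the result is $\adjtrace^k \prod_{j=1}^k (A + r + j - 1)(-2A + n - 2j)$ times a combinatorial constant; combining the two stages gives the claimed polynomial.

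The bulk of the difficulty, and the main obstacle, is the combinatorial bookkeeping: one must keep track of the exact constants (the $2^{k+r} m! (r!)^2$ prefactor and the correct offsets $r + j - 1$, $n - 2j$, $n+j-2$ in each linear factor) as well as verify that every commutator-induced residual term either contributes to the stated polynomial or is killed by the trace-free, $\nm$-annihilated hypothesis on $v$. This is precisely the computation carried out explicitly in \cite[Section 4.3]{dfg}, and the conventions here---notably the identifications $\theta_\pm$, the signs in $\divp = -\sum_k e_k \ip \Lie_{\Np_k}$, and the Lefschetz-type normalisation of $\adjtrace, \trace$---have been chosen to match theirs, so that the result transfers directly.
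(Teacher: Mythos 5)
Your approach coincides with the paper's: the paper offers no independent proof of this lemma, presenting it only as ``a restatement of the final calculations performed in \cite[Section 4.3]{dfg} using the notation of the current article,'' which is precisely the reference you invoke. Your two-stage commutation sketch is a reasonable outline of what that computation looks like, though the bookkeeping details you flag as the main obstacle (the order in which $(\dm)^m$ meets the $\Deltap$'s versus the $\divp$'s, and the provenance of the $m!\,(r!)^2$ prefactor) are indeed settled only in \cite{dfg} and not re-derived in this paper.
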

\qed

One deduces that if we take 
$\lambda_0\in\C\backslash ( -\frac n2 - \frac 12 \N_0 )$
with 
$\Re\lambda\le -1$,
and if we take 
$m\in \N$,
$r,k\in \N_0$
with 
$m=r+2k$,
then the value of the polynomial 
$P_{r,k}(-(\lambda_0+m))$
is non zero,
except in the single situation
$m\in 2\N$,
$r=0, k=m, \lambda_0+m=0$.

\begin{proposition}\label{prop:IdentifyBandsWithSymmetricPowerOfE}
Consider 
$\lambda_0\in\C\backslash ( -\frac n2 - \frac 12 \N_0 )$,
a Ruelle resonance
with 
$\Re\lambda_0\le-1$. 
Consider also 
$m\in\N$ 
such that 
$\Re\lambda_0 + m\le 0$. 
%If 
%$n$ 
%is even, suppose that 
%$\lambda\not\in-\N$ 
%and if 
%$n$ 
%is odd, suppose that 
%$\lambda\not\in-\tfrac12\N$. 
%
%
%
%Suppose further that
%$\lambda\in (-\frac n2,0]\cap -\frac12\N_0$.
Further, exclude the case 
$m$ 
even with 
$\lambda_0+m=0$.
Under these assumptions, we obtain the following short exact sequence
\begin{align}
\begin{array}{ l l l l 	l l l l 		l }
0
&\longrightarrow
&\V{m-1}{j}{\lambda_0}
&\longrightarrow
&\V{m}{j}{\lambda_0}
&\xrightarrow{(\dm)^m}
&\Resonant{A,m}{j}{\lambda_0+m} \cap \ker \nm
&\longrightarrow
&0
\end{array}
\end{align}
\end{proposition}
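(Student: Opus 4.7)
I treat the three exactness conditions separately, with surjectivity as the only substantive content. The inclusion $\V{m-1}{j}{\lambda_0} \hookrightarrow \V{m}{j}{\lambda_0}$ is immediate, as is the identity $\V{m-1}{j}{\lambda_0} = \V{m}{j}{\lambda_0} \cap \ker (\dm)^m$, giving exactness at the middle. To see that $(\dm)^m$ maps into $\Resonant{A,m}{j}{\lambda_0+m} \cap \ker \nm$, I would iterate the commutator $[A,\dm] = -\dm$ to obtain $(A+\lambda_0+m)^j (\dm)^m u = (\dm)^m (A+\lambda_0)^j u = 0$, observe that $(\dm)^m u = (\nm)^m u$ on scalars (since the $\Nm_k$ commute), whence $\nm (\dm)^m u = (\nm)^{m+1} u = (\dm)^{m+1} u = 0$, and note that the support-in-$K^+$ and wavefront-in-$E_+^*$ conditions pass unchanged through any differential operator.

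For surjectivity the main tool is Lemma~\ref{lem:HorocycleInversion}. Given $v$ in the target space, I would first decompose it trace-freely as $v = \sum_{k=0}^{\lfloor m/2 \rfloor} \adjtrace^k v^{(m-2k)}$. Since $\adjtrace$ commutes with both $A$ (as $A e_i = 0$) and $\nm$ (as the flat connection sends $e_i$ into $\R^n$'s orthogonal complement), and since the trace-free decomposition is unique, each $v^{(m-2k)}$ is itself a trace-free generalised resonant state of tensor order $m-2k$ satisfying $(A+\lambda_0+m)^j v^{(m-2k)} = 0$ and $\nm v^{(m-2k)} = 0$. Writing $r = m-2k$, the hypotheses of the proposition ensure that $P_{r,k}(-(\lambda_0+m)) \neq 0$ for every admissible pair $(r,k)$, so the polynomials $P_{r,k}(X)$ and $(X+\lambda_0+m)^j$ are coprime and Bezout's identity produces a polynomial $Q_{r,k}$ with $P_{r,k}(A) Q_{r,k}(A) v^{(m-2k)} = v^{(m-2k)}$.

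The plan is then to take the preimage
\begin{align*}
u := \sum_{k=0}^{\lfloor m/2 \rfloor} (\Deltap)^k (\divp)^{m-2k} Q_{m-2k,k}(A) v^{(m-2k)}
\end{align*}
and apply Lemma~\ref{lem:HorocycleInversion} component-by-component to obtain $(\dm)^m u = \sum_k \adjtrace^k P_{m-2k,k}(A) Q_{m-2k,k}(A) v^{(m-2k)} = v$. To conclude $u \in \V{m}{j}{\lambda_0}$ I would verify the three defining properties: the Jordan annihilation $(A+\lambda_0)^j u = 0$ follows from the shift relation $A (\Deltap)^k (\divp)^r = (\Deltap)^k (\divp)^r (A+r+2k) = (\Deltap)^k (\divp)^r (A+m)$ combined with the vanishing of $(A+\lambda_0+m)^j$ on each $v^{(m-2k)}$; the identity $(\dm)^{m+1} u = \dm v = 0$ holds because $\nm v = 0$ forces $\dm v = 0$ by symmetrisation; and the support and wavefront conditions descend from those of $v$ through the differential operators appearing in $u$. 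The hardest aspect, and exactly the reason the proposition excludes the case $m$ even with $\lambda_0 + m = 0$, is this Bezout inversion of $P_{r,k}(A)$ in the presence of nontrivial Jordan blocks; the excluded case is precisely where $P_{0,m}$ vanishes at $-(\lambda_0+m) = 0$ and the polynomial inversion collapses.
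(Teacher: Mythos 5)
Your proof is correct, and it uses the same key ingredients as the paper---the trace-free decomposition $v=\sum_k\adjtrace^k v^{(m-2k)}$ and the polynomial identity $(\dm)^m(\Deltap)^k(\divp)^r=\adjtrace^k P_{r,k}(A)$ from Lemma~\ref{lem:HorocycleInversion}---but the surjectivity argument takes a genuinely different route. The paper shows that $P_{m-2k,k}(A)$ is \emph{injective} on the finite-dimensional space $\Resonant{A,m-2k}{j}{\lambda_0+m}\cap\ker\trace\cap\ker\nm$ by induction on the Jordan order $j$ (base case: on $j=1$ states $A$ acts as the scalar $-(\lambda_0+m)$ and $P(-(\lambda_0+m))\neq0$; inductive step: $(A+\lambda_0+m)$ lowers Jordan order and commutes with $P(A)$) and then converts injectivity into surjectivity via finite-dimensionality, which is where the Fredholm structure of the resolvent enters implicitly. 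You instead invert $P_{r,k}(A)$ directly by a Bezout identity in $\C[X]$, using coprimality of $P_{r,k}(X)$ with $(X+\lambda_0+m)^j$, and write down an explicit preimage $u=\sum_k(\Deltap)^k(\divp)^{m-2k}Q_{m-2k,k}(A)v^{(m-2k)}$. This is more constructive, avoids the appeal to finite-dimensionality, and makes the Jordan-block accounting transparent (the $j$-dependence sits in the single polynomial $Q_{r,k}$ rather than an induction). Both arguments ultimately hinge on $P_{r,k}(-(\lambda_0+m))\neq0$, which is exactly what the excluded case $m$ even, $\lambda_0+m=0$ protects. One small correction: when $r=0$ the constraint $r+2k=m$ forces $k=m/2$, not $k=m$, so the vanishing polynomial in the excluded case is $P_{0,m/2}$; your $P_{0,m}$ simply inherits a typo from the paper's remark following Lemma~\ref{lem:HorocycleInversion}.
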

\begin{proof}
Denote by 
$W_m^j(\lambda_0+m)$
the third space in the sequence
$\Resonant{A,m}{j}{\lambda_0+m} \cap \ker \nm$.
The non-trivial step is showing surjectivity of 
$(\dm)^m$. 
We decompose 
$W^j_m(\lambda_0+m)$ 
into eigenspaces of 
$\adjtrace\trace$. 
In particular we denote 
\begin{align}
W^j_{m,k}(\lambda_0+m) := \adjtrace^k \left( W^j_{m-2k}(\lambda_0+m) \cap \ker \trace \right). 
\end{align}
By
Lemma~\ref{lem:HorocycleInversion}, 
there exists differential operators (linear of order $m$)
\begin{align}
K_k : W^j_{m,k}(\lambda+m) \to \V{m}{j}{\lambda_0}
\end{align}
such that 
$(\dm)^m\circ K_k 
= P_{m-2k,k}(A)$
where 
$P_{m-2k,k}=P_{r,k}$ 
is the polynomial from
Lemma~\ref{lem:HorocycleInversion}.

As
$W^j_{m,k}(\lambda_0+m)$
is finite dimensional, it suffices to show injectivity of 
$(\dm)^m\circ K_k$
which we do by induction on 
$j$.
Consider
$j=1$
in which case
$(\dm)^m\circ K_k=P_{m-2k,k}(-(\lambda_0+m))$ 
on
$W^1_{m,k}(\lambda_0+m)$ 
which is non-zero by the comment following the preceding lemma.

Consider now 
\begin{align}
u\in W^j_{m,k}(\lambda_0+m) \cap \ker ((\dm)^m\circ K_k).
\end{align}
By considering again a decomposition of the form
$u=\sum_{k=0}^{\lfloor \frac m2 \rfloor} \adjtrace^k u^{(m-2k)}$,
then the fact that
$(\dm)^m\circ K_k$
is a polynomial in 
$A$, 
implies that it commutes with
$(A+\lambda_0+m)$
hence
\begin{align}
(A+\lambda_0+m)u^{(m-2k)}\in W^{j-1}_{m-2k}(\lambda_0+m) \cap \ker \trace \cap \ker ((\dm)^m\circ K_k)
\end{align}
which by the inductive hypothesis forces 
$u\in\ker(A+\lambda_0+m)$
and the case 
$j=1$ 
now implies 
$u=0$.
\end{proof}

\begin{proposition}\label{prop:lambda=m-case}
Consider 
$\lambda_0\in-2\N \backslash (-\tfrac n2 - \tfrac 12\N_0)$,
a Ruelle resonance and set
$m:=-\lambda_0$.
Then
\begin{align}
\Resonant{A,m}{j}{0} \cap \ker \nm = 0
\end{align}
so in this case also, there is trivially a short exact sequence as in 
Proposition~\ref{prop:IdentifyBandsWithSymmetricPowerOfE}.
\end{proposition}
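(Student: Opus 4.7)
The plan is to induct on the Jordan order $j$, reducing to the base case $j=1$, and then to identify candidate resonant states with boundary distributions on $\Sn$ that must vanish via the Poisson correspondence with generalised quantum resonant states.

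For the inductive step $j\to j+1$, given $v\in\Resonant{A,m}{j+1}{0}\cap\ker\nm$, the commutation $[A,\nm]=-\nm$ coming from \eqref{eq:CommutationRelationsAN} yields $\nm(Av)=(A+1)\nm v=0$, while $Av\in\Resonant{A,m}{j}{0}$ since $A^{j}(Av)=A^{j+1}v=0$. The inductive hypothesis then forces $Av=0$, so $v\in\Resonant{A,m}{1}{0}\cap\ker\nm$, reducing everything to the base case.

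For the base case, decompose $v\in\Resonant{A,m}{1}{0}\cap\ker\nm$ into trace-free pieces $v=\sum_{k=0}^{\lfloor m/2\rfloor}\adjtrace^k v^{(m-2k)}$. Since both $A$ and $\nm$ commute with the algebraic operator $\adjtrace$, uniqueness of the trace-free decomposition delivers $Av^{(m-2k)}=0$ and $\nm v^{(m-2k)}=0$ along with the inherited support and wavefront conditions. Lifting each piece to a $\Gamma$-invariant $\tilde v^{(m-2k)}$ on $S\Hn$ and applying the isomorphism $\mathcal{Q}_-$, one obtains a boundary distribution $\omega_{m-2k}\in\Dist(\Sn;\Sym^{m-2k}_0\T^*\Sn)$ supported in $K_\Gamma$ and satisfying $U_\gamma^*\omega_{m-2k}=T_\gamma^{-(m-2k)}\omega_{m-2k}$ by specialising \eqref{eq:GEquivarianceOfOmega} at $\lambda=0$.

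The crux will be to show that each $\omega_{m-2k}$ vanishes. Under the hypothesis $\lambda_0\notin-\tfrac{n}{2}-\tfrac{1}{2}\N_0$, the Poisson operator of Section~\ref{sec:poisson} provides a linear isomorphism from such boundary distributions onto $\Resonant{\Delta,m-2k}{}{s_0}$ at the shifted parameter $s_0=\lambda_0+m+n=n$. Since $\Re s_0=n>n/2$ lies above the critical line, any element of this space would be a genuine $L^2$ eigentensor of the rough Laplacian $\n^*\n$ on trace-free divergence-free symmetric $(m-2k)$-tensors, with Jordan order one forced by self-adjointness. The main obstacle will be the geometric input that no such $L^2$ eigentensor exists on a convex cocompact hyperbolic quotient in the dimensional range allowed: for the scalar piece $k=m/2$ this amounts to the classical absence of $L^2$ harmonic functions, while for $k<m/2$ the constraint $m<n/2$ coming from excluding the exceptional set ensures the spectral bottom of $\n^*\n$ on trace-free divergence-free $(m-2k)$-tensors strictly exceeds $m-2k$. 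Concluding that every $\omega_{m-2k}$ vanishes then delivers $v=0$.
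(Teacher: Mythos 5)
Your overall strategy agrees with the paper's: reduce to Jordan order $j=1$, decompose into trace-free pieces, push forward via the Poisson correspondence (Proposition~\ref{prop:RuelleToQuantum} together with Lemma~\ref{lem:AssOfQRes}) to get $L^2$ eigentensors of $\n^*\n$ at $s_0=n$, then derive a contradiction from the spectrum. Two details deviate from the paper and one of them is an actual misstatement.

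For the pieces with $m-2k\neq 0$, you write that ``the constraint $m<n/2$ coming from excluding the exceptional set ensures the spectral bottom of $\n^*\n$ on trace-free divergence-free $(m-2k)$-tensors strictly exceeds $m-2k$.'' That is not where the inequality comes from. The explicit bound used in the paper (from Dairbekov--Sharafutdinov and the discussion preceding Lemma~6.1 of Dyatlov--Faure--Guillarmou) is that the $L^2$ spectrum of $\n^*\n$ on $\Sym^{m-2k}_0\T^*\Xone$, for $m-2k\neq 0$, is bounded below by $n+m-2k-1$. The required strict inequality $m-2k<n+m-2k-1$ is simply $n>1$, hence holds automatically for $n\ge 2$; the restriction $m<n/2$ plays no role in this comparison (it is the hypothesis under which the surrounding Propositions apply, not what makes the eigenvalue sub-spectral). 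As written, your justification is incorrect and leaves the key numerical step unproved; once the correct lower bound is quoted, the step closes.

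For the scalar piece $k=m/2$ the paper does not pass through the Poisson operator at all: it observes directly that $u^{(0)}$ would be a Ruelle resonant state of tensor order $0$ at the resonance $0$, which is impossible since (by Dyatlov--Guillarmou) Ruelle resonances of tensor order $0$ have real part at most $\delta_\Gamma-n<0$. Your route (Poisson image is an $L^2$ harmonic function, hence zero) is also valid, but you should say why the absence of nonzero $L^2$ harmonic functions holds here — e.g.\ the bottom of the $L^2$ spectrum of $\Delta$ on a convex cocompact quotient is $\min(\delta_\Gamma(n-\delta_\Gamma),n^2/4)>0$ — rather than leaving it as ``classical.'' Both treatments are fine; the paper's is shorter because it avoids the $m-2k=0$ edge case in the tensor spectral bound it cites (that bound is only stated for $m-2k\neq 0$).
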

\begin{proof}
It suffices to prove the statement for 
$j=1$.
Suppose
$u\in \Resonant{A,m}{1}{0} \cap \ker \nm$
non-zero and decompose 
$u =: \adjtrace^k u^{(m-2k)}$ 
for
$u^{(m-2k)} \in \Resonant{A,m-2k}{1}{0} \cap \ker \trace \cap \ker  \nm$.
Consider first
$ u^{(0)} $. 
This is a Ruelle resonant state (of tensor order
$0$)
on
$S\Xone$
but by
\cite{dyatlov-g}
the real part of a
Ruelle resonance of tensor order 
$0$
is not greater than
$\delta_\Gamma-n<0$.
Considering the other components of 
$u$,
define
$\varphi^{(m-2k)} : = \pi_{0*} u^{(m-2k)}$ 
for $m-2k\neq 0$
($\pi_{0*}$
being defined in Section~\ref{sec:poisson}).
By Proposition~\ref{prop:RuelleToQuantum}
this is an isomorphism
\begin{align}
\pi_{0*}:
\Resonant{A,m-2k}{1}{0} \cap \ker \trace \cap \ker \nm
\to
\Resonant{\Delta,m-2k}{1}{n}.
\end{align}
From
\cite[Lemma 8.2]{dairbekov-sharafutdinov}
and the discussion preceding
\cite[Lemma 6.1]{dfg}
the
$\Lsections$
spectrum of
$\n^*\n$
acting
on
$\Sym^{m-2k}_0\T^*\Xone$
(for $m-2k\neq0$)
is bounded below
by
$(n+m-2k-1)$.
However
$\varphi^{(m-2k)} \in \ker (\n^*\n -(m-2k))$
and by
Lemma~\ref{lem:AssOfQRes},
$\varphi^{(m-2k)} \in \Lsections(\Xone;\Sym^{m-2k}_0\T^*\Xone)$.
This
forces
$\varphi^{(m-2k)}=0$
as
$m-2k< n+m-2k-1$.
\end{proof}

To finish this section, we consider the decomposition of the set of vector-valued generalised resonant states considered in this subsection into eigenspaces of $\adjtrace\trace$. Then
\begin{align}\label{eq:DecomposeMTensorsTrace}
\Resonant{A,m}{j}{\lambda_0+m} \cap \ker \nm
=
	\bigoplus_{k=0}^{\lfloor \frac m2 \rfloor} \adjtrace^k 
		\left( \Resonant{A,m-2k}{j}{\lambda_0+m} 
			\cap \ker \trace \cap \ker \nm
		\right)
\end{align}
as $A$ commutes with the Lefschetz-type operators, and the condition 
$\ker\nm$ 
is conserved (which may be concluded from considering the form of 
$\nm$ 
acting on
$\Dist(G;\Sym^m\R^n)/\mathfrak{m}$).

%\section{Quantum Resonances}\label{sec:quantum}
\section{Quantum Resonances}\label{sec:quantum}

This section includes the principal calculation of this paper, performed in
Lemma~\ref{lem:AssOfQRes}. It characterises symmetric tensor valued generalised quantum resonant states via their asymptotic structure. Quantum resonant states are defined using the meromorphic extension of the resolvent of the Laplacian obtained in 
\cite{h:quantum}
which is based on Vasy's method
\cite{v:ml:inventiones,v:ml:forms}.
In order to prove
Lemma~\ref{lem:AssOfQRes},
a mere knowledge of the meromorphic extension (of the resolvent of the Laplacian) does not seem to suffice. Indeed the proof presented requires meromorphic extensions of resolvents of various operators constructed in
\cite{h:quantum}. These are recalled in the following subsection.

\subsection{Vasy's operator on even asymptotically hyperbolic manifolds}

Consider the Lorentzian cone
$\Mone := \R^+_\s \times \Xone$
with Lorentzian metric
$\eta = -d\s\otimes d\s + \s^2 g$
where
$(\Xone,g)$ is even asymptotically hyperbolic
\cite[Definition 5.2]{g:duke}
with Levi-Civita connection 
$\n$.
(Convex cocompact quotients of hyperbolic space being the model geometry for such manifolds.)
Symmetric tensors decompose 
\begin{align}
\Sym^m \T^* \Mone = \bigoplus_{k=0}^m a_k \, (\dss)^{m-k} \sp \Sym^k  \T^*\Xone,
\qquad
a_k := \tfrac{1}{\sqrt{(m-k)!}}
\end{align}
and the (Lichnerowicz) d'Alembertian
$\DeltaAmb$ 
acts on symmetric 
$m$-tensors. 
A particular conjugation by 
$\s$ 
of 
$\s^2 \DeltaAmb$ 
behaves nicely relative to the preceding decomposition giving the operator
\begin{align}
\QVasyUp := \s^{\frac n2 - m+2} \DeltaAmb \s^{-\frac n2 +m} = \n^*\n + (\sds)^2 + \DVasyUp + \ZeroOrderVasyUp
\end{align}
for a first order differential operator 
$ \DVasyUp + \ZeroOrderVasyUp$ 
on 
$\Sym^m \T^*\Mone$. 
(Above 
$\sds$ 
is considered a Lie derivative 
and, along with
$\n^*\n$,
acts diagonally on each factor
$ \left(\dss\right)^{m-k} \sp \Sym^k  \T^*\Xone$.)
The b-calculus of Melrose 
\cite{melrose:aps}
permits this operator to be pushed to a family of operators, denoted
$\QVasyDOWNL$,
(holomorphic in the complex variable 
$\lambda$)
acting on
$\oplus_{k=0}^m\Sym^k \T^*\Xone$
above
$\Xone$
which takes the form
\begin{align}
\QVasyDOWNL = \n^*\n + \lambda^2 + \DVasyDown + \ZeroOrderVasyDown
\end{align}
for a first order differential operator
$\DVasyDown + \ZeroOrderVasyDown$.
(A more precise description of
$\DVasyDown + \ZeroOrderVasyDown$
will be given shortly.)

Consider a boundary defining function,
$\rho$,
for the conformal compactification 
$\overline \Xone$.
Near
$Y:=\partial\overline\Xone$,
say on
$U:=(0,1)_\rho\times Y$,
the metric may be written
\begin{align}
g
=
\frac{d\rho^2+ h}{\rho^2}
\end{align}
where 
$h$ 
is a family of Riemannian metrics on 
$Y$ 
smoothly parametrised by
$\rho\in[0,1)$
whose Taylor expansion at 
$\rho=0$ 
contains only even powers of 
$\rho$. 
Again consider the Lorentzian cone
$\Mone=\R^+_\s \times \Xone$ 
with metric
$\gamb$.
The metric $\gamb$ degenerates at 
$\rho=0$
however under the change of coordinates
\begin{align}
\t:=\s/\rho,
\qquad
\mu:=\rho^2
\end{align}
the metric takes the following form
on
$\RplusT \times (0,1)_\mu \times Y$
\begin{align}
\gamb = -\mu d\t \otimes d\t - \tfrac12 \t ( d\mu \otimes d\t + d\t\otimes d\mu) + \t^2 h.
\end{align}
We extend the manifold
$\Xone$
to a slightly larger manifold
$\Xthree:=((-1,0]_\mu\times Y) \sqcup \Xone$
and use
$\mu$
to provide a smooth structure explained precisely in 
\cite[Section 2]{h:quantum}.
(Importantly, the chart
$(-1,1)_\mu\times Y$
provides smooth coordinates 
near 
$\partial \overline\Xone$
in
$\Xthree$.)
The ambient Lorentzian metric
$\gamb$
is also extended to
$\Mthree := \RplusT\times \Xthree$
by extending 
$h$ 
to a family of Riemannian metrics on
$Y$
smoothly parametrised by
$\mu \in (-1,1)$.

We require a notion of even sections on
$\overline\Xone$.
We declare 
$C^\infty_\even(\overline\Xone)$
to be the restriction of 
$C^\infty(\Xthree)$
to
$\overline\Xone$.
Similarly, for a vector bundle which is defined over 
$\Xthree$,
notably
$\Sym^m\T^*\Xthree$,
the notion of even sections
is defined as the restriction to 
$\overline\Xone$
of smooth sections over
$\Xthree$.

We now follow the recipe given in the first paragraph of this subsection.
The Lichnerowicz d'Alembertian
$\DeltaAmb$ 
acts on symmetric $m$-tensors above $\Mthree$. Conjugating $\t^2\DeltaAmb$ provides 
\begin{align}
\PVasyUp := \t^{\frac n2 - m+2} \DeltaAmb \t^{-\frac n2 +m}
\end{align}
The b-calculus pushes this operator to a family of operators (holomorphic in the complex variable $\lambda$), termed ``Vasy's operator" and denoted 
\begin{align}
\PVasyDOWNL \in \mathrm{Diff}^2 (\Xthree ; \oplus_{k=0}^m \Sym^k \T^*\Xthree).
\end{align}
It is elliptic on 
$\Xone$
and hyperbolic on
$\Xthree\backslash \overline\Xone$.
On
$U$,
the two families are related 
\begin{align}
\PVasyDOWNL = \rho^{- \lambda - \frac n2 + m - 2} J \QVasyDOWNL J^{-1} \rlnm
\end{align}
for
$J\in C^\infty( \Xone ; \End( \oplus_{k=0}^m \Sym^k \T^* \Xone ))$ 
whose entries are homogeneous polynomials in 
$\drr\sp$,
upper triangular in the sense that
$J(\Sym^{k_0} \T^*\Xone) \subset \oplus_{k=k_0}^m \Sym^k \T^*\Xone$,
and whose diagonal entries are the identity.

There are meromorphic inverses with finite rank poles for the operators
$\PVasyDOWNL$
and
$\QVasyDOWNL$.
(Using  
$\eta$ to provide a notion of regularity for
sections
of
$\oplus_{k=0}^m \Sym^k \T^*\Xthree$
and
microlocal analysis, including propogation of singularities and radial point estimates, in order to solve a Fredholm problem.)
We denote respectively these meromorphic inverses by
\begin{align}
\Resolvent{\PVasyDown,m}{\lambda}
:
C_c^\infty (\Xthree ; \oplus_{k=0}^m \Sym^k \T^*\Xthree)
\to
C^\infty( \Xthree ; \oplus_{k=0}^m \Sym^k \T^*\Xthree)
\end{align}
and
\begin{align}
\Resolvent{\QVasyDown,m}{\lambda}
:
C_c^\infty (\Xone ; \oplus_{k=0}^m \Sym^k \T^*\Xone)
\to
\rlnm  \oplus_{k=0}^m \rho^{-2k}  C^\infty_\even (\overline\Xone ; \Sym^k \T^*\Xone).
\end{align}

To finish this subsection we restrict to the case where
$\Xone$ is a convex cocompact quotient of hyperbolic space. 
Lemma~\ref{lem:AssOfQRes}
does not require a complete description of 
$\QVasyDOWNL$
however its form upon restriction to
$\Sym^m_0 \T^*\Xone$
is required. Precisely, we have
\begin{align}
\left. \QVasyDOWNL \right|_{\Sym^m_0 \T^*\Xone}
=
\twovector{\n^*\n + \lambda^2 - \frac{n^2}{4} - m}{ -2 \div }
:
C^\infty(\Xone ; \Sym^m_0 \T^*\Xone )
\to
C^\infty(\Xone ; \oplus_{k=m-1}^m \Sym^k_0 \T^*\Xone)
\end{align} 
which upon setting $s := \lambda + \frac n2$ provides
\begin{align}
\left. \QVasyDown_{s-\frac n2} \right|_{\Sym^m_0 \T^*\Xone}
=
\twovector{\n^*\n - s(n-s) - m}{ -2 \div }.
\end{align}
In a similar spirit we record that
\begin{align}
\left. J \right|_{\oplus_{k=m-1}^m \Sym^k_0 \T^*\Xone} = \twomatrix{1}{\drr\sp}{0}{1}.
\end{align}

\subsection{Quantum resonances for convex cocompact quotients}

The rough Laplacian 
$\n^*\n$
acts on 
$\Sym^m_0\T^*\Xone$.
For 
$s\in\C$
with
$s\gg 1$,
the operator
$\n^*\n-s(n-s)-m$
has an inverse acting on
$\Lsections(\Xone; \Sym^m_0 \T^*\Xone)$.
Since
$\Xone$ is locally hyperbolic space,
$\n^*\n$
commutes with the divergence operator
$\div$.
This property is key to proving the meromorphic extension of the inverse
\cite[Theorem 1.4]{h:quantum}.
Precisely,
the inverse of 
$\n^*\n-s(n-s)-m$,
written
$\Resolvent{\Delta,m}{s}$,
admits,
upon restriction to 
$\Sym^m_0\T^*\Xone \cap \ker\delta$,
a meromorphic extension from
$\Re s\gg 1$
to
$\C$
as a family of bounded operators
\begin{align}
\Resolvent{\Delta,m}{s} 
: C_c^\infty(\Xone;\Sym^m_0 \T^*\Xone) \cap \ker \div 
\to
\rho^{s -m} C^\infty_\even(\overline \Xone;\Sym^m_0 \T^*\Xone) \cap \ker \div. 
\end{align}
(Here
$\rho$ is an even boundary defining function providing the conformal compactification
$\overline \Xone$.)
Near a pole 
$s_0$, 
called a quantum resonance,
the resolvent may be written
\begin{align}
\Resolvent{\Delta,m}{s} 
= 
\ResolventHol{\Delta,m}{s} 
+ 
\sum_{j=1}^{J(\lambda_0)} \frac{ 	(\n^*\n-s_0(n-s_0)-m)^{j-1} \Projector{\Delta,m}{s_0}}
						{	(s(n-s)-s_0(n-s_0))^j}
\end{align}
where the image of the finite rank projector 
$\Projector{\Delta,m}{\lambda_0}$ 
is called the space of generalised quantum resonant states (of tensor order $m$)
\begin{align}
\Resonant{\Delta,m}{}{s_0}
:=
\Im \left( \Projector{\Delta,m}{s_0} \right).
\end{align} 
We filter this space by declaring
\begin{align}
\Resonant{\Delta,m}{j}{ s_0}
:=
\setleft{
\varphi \in \Resonant{\Delta,m}{}{ s_0} 
}{
(\n^*\n- s_0(n- s_0)-m)^j \varphi =0
}
\end{align}
saying that such states are of Jordan order (at most) $j$. Then
\begin{align}
\Resonant{\Delta,m}{}{ s_0} = \cup_{j \ge1} \Resonant{\Delta,m}{j}{ s_0}
\end{align}
and the space of quantum resonant states is $\Resonant{\Delta,m}{1}{ s_0}$.

\begin{lemma}\label{lem:AssOfQRes}
For 
$s_0\in \C$
with
$s_0 \neq \frac n2$, 
generalised quantum resonant states 
$\Resonant{\Delta,m}{j}{s_0}$
are precisely identified with 
\begin{align}
\setleft{
	\varphi \in \bigoplus_{k=0}^{j-1} \, \rho^{s_0-m} (\log \rho)^k \,C^\infty_\even(\overline \Xone;\Sym^m_0\T^* \Xone)
	}{
	\varphi \in \ker (\n^*\n - s_0(n-s_0)-m)^j \cap \ker \div 
}.
\end{align}
\end{lemma}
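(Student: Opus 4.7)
The plan is to prove both inclusions in the claimed set equality by transporting sections between the three pictures constructed in Section~\ref{sec:quantum}: the Laplacian resolvent $\Resolvent{\Delta,m}{s}$, the b-calculus resolvent $\Resolvent{\QVasyDown,m}{\lambda}$ with $\lambda = s - n/2$, and the extended Vasy resolvent $\Resolvent{\PVasyDown,m}{\lambda}$ on $\Xthree$. The algebraic backbone is the explicit block form of $\left.\QVasyDOWNL\right|_{\Sym^m_0 \T^*\Xone}$ displayed just above the lemma, which realises $\Resolvent{\Delta,m}{s}$ as a component of $\Resolvent{\QVasyDown,m}{s - n/2}$ on trace- and divergence-free sections, together with the conjugation $\PVasyDOWNL = \rho^{-\lambda - n/2 + m - 2}J\QVasyDOWNL J^{-1}\rho^{\lambda + n/2 - m}$.

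\textbf{Forward inclusion.} Take $\varphi \in \Resonant{\Delta,m}{j}{s_0}$ arising as a Laurent coefficient at $s_0$ of $\Resolvent{\Delta,m}{s}f$ for some test $f$. The mapping property
\begin{align}
\Resolvent{\QVasyDown,m}{\lambda} : C_c^\infty \to \rho^{\lambda + n/2 - m}\bigoplus_{k=0}^m \rho^{-2k}C^\infty_\even(\overline{\Xone})
\end{align}
restricts, on the $\Sym^m_0 \cap \ker\div$ component, to $\rho^{s - m}C^\infty_\even$. Writing $\rho^{s-m} = \rho^{s_0 - m}\sum_{k\ge 0}((s-s_0)\log\rho)^k/k!$ and multiplying by the finite-order Laurent expansion of the operator-valued coefficient, the Laurent coefficients of order at most $j$ generating $\varphi$ lie in $\bigoplus_{k=0}^{j-1}\rho^{s_0-m}(\log\rho)^k C^\infty_\even(\overline{\Xone};\Sym^m_0\T^*\Xone)$. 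Preservation of $\ker\div$ by $\Resolvent{\Delta,m}{s}$ and annihilation by $(\n^*\n - s_0(n-s_0) - m)^j$ are built into the filtration; the hypothesis $s_0 \neq n/2$ enters here to ensure the quadratic $s(n-s) - s_0(n-s_0)$ has a simple zero at $s_0$, so that Jordan order in $s - s_0$ matches Jordan order in $s(n-s) - s_0(n-s_0)$.

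\textbf{Reverse inclusion.} Conversely, for $\varphi$ in the displayed RHS, the strategy is to realise $\varphi$ as a Laurent coefficient of $\Resolvent{\QVasyDown,m}{\lambda}$ at $\lambda_0 := s_0 - n/2$. Using the conjugation, $\Resolvent{\QVasyDown,m}{\lambda} = J^{-1}\rho^{\lambda + n/2 - m}\Resolvent{\PVasyDown,m}{\lambda}\rho^{-\lambda - n/2 + m - 2}J$, so the Laurent coefficients of $\Resolvent{\QVasyDown,m}{\lambda}$ are obtained from the Laurent coefficients of $\Resolvent{\PVasyDown,m}{\lambda}$ (which are sections smooth on $\Xthree$) multiplied by the factor $\rho^{\lambda + n/2 - m}$, whose Taylor expansion
\begin{align}
\rho^{\lambda + n/2 - m} = \rho^{s_0 - m}\sum_{k\ge 0}\frac{(\lambda-\lambda_0)^k(\log\rho)^k}{k!}
\end{align}
generates exactly the $(\log\rho)^k$ structure appearing in the lemma. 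The meromorphic Fredholm theory underlying $\Resolvent{\PVasyDown,m}{\lambda}$ in \cite{h:quantum} identifies the generalised null space of $\PVasyDOWNL$ at $\lambda_0$ with the range of these Laurent coefficients. Matching powers of $\log\rho$ in $\varphi$ reverse-engineers a smooth section on $\Xthree$ in this null space; the $\Sym^m_0 \cap \ker\div$ hypotheses on $\varphi$ then place it in the image of $\Projector{\Delta,m}{s_0}$, that is, in $\Resonant{\Delta,m}{j}{s_0}$.

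\textbf{Main obstacle.} The subtle point is the reverse inclusion, specifically confirming that the prescribed $(\log\rho)$-expansion corresponds exactly to a generalised null state of the \emph{family} $\PVasyDOWNL$ at $\lambda_0$ rather than merely of the frozen operator $\left.\PVasyDOWNL\right|_{\lambda = \lambda_0}$. The block-upper-triangular structure of $J$ and of $\QVasyDOWNL|_{\Sym^m_0}$ is what permits isolating the $\Sym^m_0$ component from the ancillary components in $\oplus_k \Sym^k\T^*\Xone$; the constant-curvature commutation $[\n^*\n,\div] = 0$ ensures the divergence-free condition is preserved throughout the Laurent analysis; and the exclusion $s_0 \neq n/2$ is what aligns the two Jordan filtrations appearing in the definition of $\Projector{\Delta,m}{s_0}$ and in the $\rho^{\lambda + n/2 - m}$ expansion.
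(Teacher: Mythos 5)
Your forward inclusion is essentially the paper's: extract $\varphi$ as a Laurent coefficient of $\Resolvent{\Delta,m}{s}\psi$, use the $\rho^{s-m}C^\infty_\even$ mapping property from [Had16, Theorem~1.4], and Taylor-expand $\rho^{s-m}$ about $s_0$ to generate the $(\log\rho)^k$ factors. Your remark on why $s_0\neq n/2$ matters (simple zero of $s(n-s)-s_0(n-s_0)$) is also apt.

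The reverse inclusion, however, has a genuine gap, and it sits exactly where you flag the ``main obstacle.'' You write that ``matching powers of $\log\rho$ in $\varphi$ reverse-engineers a smooth section on $\Xthree$ in this null space,'' but this reverse-engineering is precisely what needs to be proved and is not automatic. The paper constructs a modified sequence $\varphi^{(1)},\dots,\varphi^{(j)}$ satisfying the recurrence $\AVasyDown_{s_0}\varphi^{(k)} = (n-2s_0)\varphi^{(k-1)}-\varphi^{(k-2)}$, then forms
\begin{align}
\Phi^{(k)} := \rho^{-s_0+m}\sum_{\ell=0}^{k-1}\tfrac{(-\log\rho)^\ell}{\ell!}\,\varphi^{(k-\ell)},
\end{align}
which \emph{a priori} only lies in $\oplus_{\ell=0}^{k-1}(\log\rho)^\ell\,C^\infty_\even(\overline\Xone;\Sym^m_0\T^*\Xone)$. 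The claim that $\Phi^{(k)}$ is in fact \emph{smooth and even} (no $\log\rho$ terms survive) — so that it can be extended onto $\Xthree$ and fed to $\Resolvent{\PVasyDown,m}{\lambda}$ — is established only by an explicit inductive computation of $\rho^2\PVasyDown_{s_0-n/2}\Phi^{(k)}$ in the collar, using the local form of the Laplacian, the shape operator $B$, and the identity $\rho^2\PVasyDown_{s_0-n/2}\Phi^{(k)}=\bigl(\begin{smallmatrix}2\rdr+2B+\tr_h B\\-2\drr\,\sp\end{smallmatrix}\bigr)\Phi^{(k-1)}$. This is what the paper calls ``the principal calculation of this paper,'' and your proposal skips it entirely: the block-upper-triangular structure of $J$ and the preservation of $\ker\div$ that you invoke do not by themselves show that the $\log\rho$ contributions cancel in $\Phi^{(k)}$. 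Without that cancellation you cannot place $\Phi^{(k)}$ into $C^\infty(\Xthree)$, and the meromorphic Fredholm argument for $\PVasyDOWNL$ cannot be started, so the identification of $\varphi$ with an element of $\Im(\Projector{\Delta,m}{s_0})$ does not go through.
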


\begin{proof}
We introduce the short-hand
\begin{align}
\AVasyDown_{s} : = (\n^*\n - s(n-s) - m).
\end{align}

That a generalised resonant state has the prescribed form is reasonably direct. Indeed given 
$\varphi \in \Im \left(  \Projector{\Delta,m}{s_0} \right)$
there exists 
$\psi \in C_c^\infty( \Xone;\Sym^m_0 \T^* \Xone)$ 
(which is divergence-free) such that 
$\varphi = \Residue{s_0}{\Resolvent{\Delta,m}{s} \psi}$. 
By
\cite[Theorem 1.4]{h:quantum},
we may write
\begin{align}
\Resolvent{\Delta,m}{s} \psi =: \rho^{s-m} \Psi_s \in \ker\div
\end{align}
for 
$\Psi$
a meromorphic family taking values in 
$C^\infty_\even(\overline \Xone;\Sym^m_0\T^* \Xone)$. 
Supposing the specific Jordan order of 
$\varphi$ 
to be 
$j\le J(s_0)$, 
equivalently 
$\AVasyDown^{j-1}_{s_0} \varphi\neq 0$ 
and 
$\varphi \in \ker \AVasyDown^j_{s_0}$, 
implies 
$\Psi$ 
has a pole of order 
$j$ 
at 
$s_0$. 
Expanding 
$\rho^{s-m}$ 
and 
$\Psi_s$ 
in Taylor and Laurent series about 
$s_0$ 
respectively gives
\begin{align}
\Resolvent{\Delta,m}{s} \psi
=
\left(
	\rho^{s_0-m} \sum_{k=0}^{j-1} (\log \rho)^k \frac{(s-s_0)^k}{k!} + O((s-s_0)^j)
\right)
\left(
	\Psi^\mathrm{Hol}_s + \sum_{k=0}^{j} \frac{\Psi^{(k)}}{(s-s_0)^k}
\right)
\end{align}
with 
$\Psi^\mathrm{Hol}$ 
(a holomorphic family) and 
$\Psi^{(k)}$ 
taking values in 
$C^\infty_\even(\overline \Xone;\Sym^m_0\T^* \Xone)$.
Extracting the residue gives the result that
\begin{align}
	\varphi \in \left(
				 \oplus_{k=0}^{j-1} \,\rho^{s_0-m} (\log \rho)^k \,C^\infty_\even(\overline \Xone;\Sym^m_0\T^* \Xone)
			\right)
			 \cap \ker \div.
\end{align}

For the converse statement we initially follow \cite[Proposition 4.1]{ghw}. Suppose 
$\varphi \in \ker \AVasyDown_{s_0}^{j}$
trace-free, divergence-free, and takes the required asymptotic form. We may suppose 
$\AVasyDown_{s_0}^{j-1}\varphi \neq 0$.
Set 
\begin{align}
\varphi^{(1)}
:= 
\AVasyDown_{s_0}^{j-1} \varphi 
\in \rho^{s_0-m} C^\infty_\even(\overline \Xone;\Sym^m_0\T^* \Xone) \cap\ker\div  .
\end{align}
For 
$k\in\{2,\dots, j\}$, 
there exist polynomials 
$p_{k,l}$ 
such that upon defining
\begin{align}
\varphi^{(k)} 
:= 
(n-2s_0)^{k-1} \AVasyDown_{s_0}^{(j-k)}\varphi + \sum_{\ell=1}^{k-1} p_{k,l}(n-2s_0) \AVasyDown_{s_0}^{(j-k+\ell )}\varphi
\in \ker\trace\cap\ker\div
\end{align}
we satisfy the condition, for 
$k\in\{1,\dots,j\}$,
\begin{align}\label{eqn:condition-phi(k)}
\AVasyDown_{s_0} \varphi^{(k)} - (n-2s_0)\varphi^{(k-1)} + \varphi^{(k-2)} =0
\end{align}
(with the understanding that $\varphi^{(0)}=\varphi^{(-1)}=0$).
Note that such a condition appears upon demanding 
\begin{align}
\AVasyDown_s
\varphi_s = O((s-s_0)^j),
\qquad
\varphi_s := \sum_{k=1}^j \varphi^{(k)} (s-s_0)^{k-1}
\end{align}
Define
\begin{align}
\Phi_s := \sum_{k=1}^j \Phi^{(k)}(s-s_0)^{k-1},
\qquad
\Phi^{(k)} := \rho^{-s_0+m} \sum_{\ell=0}^{k-1} \tfrac{(-\log \rho)^\ell}{\ell !} \varphi^{(k-\ell)}.
\end{align}
We claim that 
\begin{align}\label{eqn:ClaimAsymptoticPhi}
\Phi^{(k)}\in C^\infty_\even(\overline \Xone ; \Sym^m_0 \T^* \Xone).
\end{align} 
As 
$\Phi^{(k)}$ 
a priori belongs in the space 
$\oplus_{\ell=0}^{k-1} \,(\log \rho)^\ell \,C^\infty_\even(\overline \Xone;\Sym^m_0\T^* \Xone)$, 
it suffices to observe that
\begin{align}
\PVasyDown_{s_0-\frac n2}\Phi^{(k)} \in C^\infty_\even(\overline \Xone ; \oplus_{k={m-1}}^m \, \Sym^k \T^* \Xone)
\end{align}
where
\begin{align}
\rho^2 \PVasyDown_{s_0-\frac n2} \Phi^{(k)} =  \twomatrix{1}{\drr\sp }{0}{1} \rho^{-s_0+m}
\twovector{\AVasyDown_{s_0}}{-2\div} \rho^{s_0-m} \Phi^{(k)}.
\end{align}
We perform the required calculation in the collar neighbourhood 
$U=(0,1)_\rho\times Y$
where the metric is of the form
$g = \rho^{-2}(d\rho^2 + h)$
and with a frame 
$\ensemble{dy^i}{1}{i}{n}$ 
for 
$\T^*Y$.
Define 
$\rho^{-2} B\in C^\infty_\even( \Xone;\End(\T^*Y))$ 
by 
$Bdy^i := \sum_{jk} \frac12 (h^{-1})^{ij}(\rdr h_{jk}) dy^k$ 
and extend it to 
$\rho^{-2} B\in C^\infty_\even( \Xone;\End(\T^* \Xone))$ 
as a derivation with 
$B d\rho:=0$. 
The Laplacian, on functions, takes the form
\begin{align}\label{eqn:lapfunctions}
\Delta = -(\rdr)^2 + \rho^2 \Delta_h + (n-\tr_h B) \rdr.
\end{align}
We calculate
$\rho^2 \PVasyDown_{s_0-\frac n2} \Phi^{(k)}$.
The first tedious step is
\begin{align}
&\rho^{-s_0+m} \AVasyDown_{s_0} \rho^{s_0-m} \Phi^{(k)} \\
&= \rho^{-s_0 + m} \left( \Delta - s_0(n-s_0) - m \right) \sum_{\ell=0}^{k-1} \frac{(-\log \rho)^\ell}{\ell !} \varphi^{(k-\ell)} \\
&=\rho^{-s_0 + m} \sum_{\ell=0}^{k-1} \frac{1}{\ell !} 
	\left( 
	(-\log \rho)^\ell \AVasyDown_{s_0} \varphi^{(k-\ell)} 
	- 2 \tr_g \left(
			\n (-\log \rho)^\ell \otimes \n \varphi^{(k-\ell)}
			\right)
	+ (\Delta (-\log \rho)^\ell ) \varphi^{(k-\ell)} 
	\right)\\
&=\rho^{-s_0 + m} \sum_{\ell=0}^{k-1} \frac{1}{\ell !} 
	\left( 
	(-\log \rho)^\ell \AVasyDown_{s_0} \varphi^{(k-\ell)} 
	- 2\tr_g \left(
			\n (-\log \rho)^\ell \otimes \n \varphi^{(k-\ell)}
			\right)
	+ \left(\Delta (-\log \rho)^\ell \right) \varphi^{(k-\ell)} 
	\right)
\end{align}
and we split this calculation up further into three parts. 
Treating the first part with \eqref{eqn:condition-phi(k)},
\begin{align}
&\rho^{-s_0+m}\sum_{\ell = 0}^{k-1} 		
		\frac{(-\log \rho)^\ell}{\ell !} \AVasyDown_{s_0}  \varphi^{(k-\ell)} \\
&= \rho^{-s_0+m}\sum_{\ell = 0}^{k-1} 	
	\frac{(-\log \rho)^\ell}{\ell !} \left( 
						(n-2s_0) \varphi^{(k-1-\ell)} - \varphi^{(k-2-\ell)} 
						\right) \\
&=	(n-2s_0) \Phi^{(k-1)} - \Phi^{(k-2)}.
\end{align}
Treating the second part directly
\begin{align}
&\rho^{-s_0+m}\sum_{\ell = 0}^{k-1} 		\frac{1}{\ell !} 
		\left(
		- 2 \tr_g \left(
			\n (-\log \rho)^\ell \otimes \n \varphi^{(k-\ell)}
			\right)
		\right)\\
&=\rho^{-s_0+m}\sum_{\ell = 0}^{k-1} 		\frac{(-\log \rho)^{\ell-1}}{(\ell-1) !} 
		\left(
		2 \n_\rdr \varphi^{(k-\ell)}
		\right)\\
&=\rho^{-s_0+m}\sum_{\ell = 0}^{k-1} 		
		2\n_\rdr \left (
				\frac{(-\log \rho)^{\ell-1}}{(\ell-1) !} 
				 \varphi^{(k-\ell)}
				\right)
		-2 \left(
			\n_\rdr  \frac{(-\log \rho)^{\ell-1}}{(\ell-1) !}
			\right)
			\varphi^{(k-\ell)} \\
&= 2 \rho^{-s_0+m} \n_\rdr \left( \rho^{s_0-m} \Phi^{(k-1)} \right) + 2 \Phi^{(k-2)} \\
&= 2 \rho^{m} \n_\rdr \left( \rho^{-m} \Phi^{(k-1)} \right) +2 s_0 \Phi^{(k-1)} + 2 \Phi^{(k-2)}. \\
\end{align}
Treating the third part with \eqref{eqn:lapfunctions}
\begin{align}
&\rho^{-s_0+m}\sum_{\ell = 0}^{k-1} 	\frac{1}{\ell !}		
	\left(\Delta (-\log \rho)^\ell \right) \varphi^{(k-\ell)} \\
&=\rho^{-s_0+m}\sum_{\ell = 0}^{k-1} 	\frac{1}{\ell !} 
	\left(
		-\ell(\ell-1) (-\log \rho)^{\ell-2} + (\tr_h B-n)\ell (-\log \rho)^{\ell-1} 
	\right) \varphi^{(k-\ell)} \\
&=   (\tr_h B-n) \Phi^{(k-1)} -\Phi^{(k-2)}.
\end{align}
Combining these calculations provides
\begin{align}\label{eqn:PPhim}
\rho^{-s_0+m} \AVasyDown_{s_0} \rho^{s_0-m} \Phi^{(k)} = (\tr_h B+ 2\rho^m\n_{\rdr}\rho^{-m}) \Phi^{(k-1)}.
\end{align}
The second tedious step in calculating
$\rho^2 \PVasyDown_{s_0-\frac n2} \Phi^{(k)}$
is (recall
$\varphi^{(k-\ell)}\in\ker\div$)
\begin{align}
&\rho^{-s_0+m} (-2\div) \rho^{s_0+m}\Phi^{(k)} \\
& = \rho^{-s_0+m}   \sum_{\ell = 0}^{k-1} 	\frac{2}{\ell !}
	\tr_g \left ( \n (-\log \rho)^\ell \otimes \varphi^{(k-\ell)} \right) \\
& = \rho^{-s_0+m}   \sum_{\ell = 0}^{k-1} 	\frac{(-\log \rho)^{\ell-1}}{(\ell-1) !}
	\left ( - 2\drr \ip  \varphi^{(k-\ell)} \right) \\
&= -2 \drr \ip \Phi^{(k-1)} 
\end{align}
Combing the two previous calculations provides
\begin{align}
\rho^2 \PVasyDown_{s_0-\frac n2} \Phi^{(k)} 
=  \twomatrix{1}{\drr\sp }{0}{1} 
\twovector{2 \rho^m \n_\rdr \rho^{-m} + \tr_h B}{-2 \drr\ip }
\Phi^{(k-1)}
\end{align}
which may be developed upon analysing the following term
\begin{align}
\left (\rho^m \n_\rdr \rho^{-m} - \drr \sp \drr \ip \right ) \Phi^{(k-1)}.
\end{align}
Writing
\begin{align}
\Phi^{(k-1)} = \sum_{\ell=0}^m \sum_{L\in\mathscr{A}^\ell} \Phi^{(k-1)}_{\ell,L}(\rho d\rho)^{m-\ell} dy^L,
\qquad
 \Phi^{(k-1)}_{\ell,L} \in C^\infty( \Xone),
\end{align}
and remarking $\n_\rdr \rho d\rho=2\rho d\rho$ and $\n_\rdr dy^\ell = (1+B)dy^\ell$ gives
\begin{align}
&\left (\rho^m \n_\rdr \rho^{-m} - \drr \sp \drr \ip \right ) \Phi^{(k-1)} \\
&= \left(
		-m + {\rdr} + 2(m-\ell) + (\ell+B) - (m-\ell)
		\right) \sum_{L\in\mathscr{A}^\ell} \Phi^{(k-1)}_{\ell,L}(\rho d\rho)^{m-\ell} dy^L \\
&= ({\rdr} + B) \Phi^{(k-1)}
\end{align}
where
$\rdr$
is to be interpreted as a Lie derivative.
This finally establishes that
\begin{align}
\rho^2 \PVasyDown_{s_0} \Phi^{(k)} = \twovector{ 2 \rdr + 2B + \tr_h B}{-2\drr \sp } \Phi^{(k-1)}
\end{align}
which by induction on 
$k$ 
produces the desired claim that 
$\Phi^{(k)}\in C^\infty_\even(\overline \Xone ; \Sym^m \T^* \Xone)$. 

We extend 
$\Phi^{(k)}$ 
smoothly onto compactly supported sections over 
$\Xthree$ 
and apply 
$\Resolvent{\PVasyDown,m}{s-\frac n2} $ 
to 
$\PVasyDown_{s-\frac n2} \Phi_s$. 
On 
$ \Xone$,
\begin{align}
\Phi_s 
&= \Resolvent{\PVasyDown,m}{s-\tfrac n2} \PVasyDown_{s-\frac n2} \Phi_s \\
&= \rho^{-s+m} J \Resolvent{\QVasyDown,m}{s-\tfrac n2} \twovector{\AVasyDown_s}{-2\div} \rho^{s-m} \Phi_s
\end{align}
whence upon unpacking the definition of 
$\Phi_s$ 
and the expansion of
$\rho^{s+m}$
in
$s$
about
$s_0$
implies
\begin{align}
\varphi_s + O((s-s_0)^j) = \Resolvent{\QVasyDown,m}{s-\tfrac n2}  (s-s_0)^j  \psi_s
\end{align}
for $\psi$ a holomorphic family taking values in 
$C^\infty_\even(\overline \Xone; \oplus_{k=m-1}^m \, \Sym^m \T^* \Xone)$.
Considering the term at order 
$(s-s_0)^{j-1}$ 
provides that 
$\varphi^{(j)}$
is in the image of 
$\Projector{\QVasyDown,m}{s_0-\frac n2}$.
As $\varphi^{(j)}\in C^\infty(\Xone;\Sym^m_0 \T^* \Xone)\cap\ker\div$ and
\begin{align}
\Im \left( \Projector{\Delta,m}{s_0}  \right)
= \Im \left( \Projector{\QVasyDown,m}{s_0-\frac n2} \right) \cap C^\infty(\Xone;\Sym^m_0 \T^* \Xone)\cap\ker\div
\end{align}
we deduce that
$\varphi^{(j)}$
is in the image of
$\Projector{\Delta,m}{s_0}$.
Therefore 
$\AVasyDown^k_{s_0}\varphi^{(j)}$
is also in said image for 
$k\le j$ whence the definition of 
$\varphi^{(k)}$ 
provides the desired result that 
$\varphi$
is in the image of 
$\Projector{\Delta,m}{s_0}$.
\end{proof}

%\section{Poisson Operator}\label{sec:poisson}

\section{Boundary Distributions and the Poisson Operator}\label{sec:poisson}

Define 
$\Bd_m(\lambda)$ 
to be the following set of boundary distributions
\begin{align}
%\Bd_m(\lambda)
%:=
			\setright{ 
				\omega \in \Dist(\Sn ; \Sym^m_0 \T^*\Sn)
				}{
				\supp(w)\subset K_\Gamma, \,
				U_\gamma^* \omega (y) = T_\gamma (y)^{-\lambda - m} \omega(y)
				\textrm{ for }\gamma\in \Gamma, y\in \Sn
				}.
\end{align}
Then for
$\lambda_0\in\C$
a resonance, we obtain the following identification using 
\eqref{eq:GEquivarianceOfOmega},
\begin{align}
\pi_\Gamma^* \left( \Resonant{A,m}{1}{\lambda_0} \cap \ker \trace \cap \ker \nm \right)
=
(\Phi_-)^{\lambda_0} \mathcal{Q}_- \left( \Bd_m(\lambda_0) \right).
\end{align}

The Poisson operator
is defined via integration of the fibres of 
$\pi_S: S\Hn\to\Hn$. 
For $u\in \Dist (S\Hn; \otimes^m \E^*)$
we define, for
$x\in\Hn$,
\begin{align}
(\pi_{0*} u )(x) : = \int_{S_x \Hn} u(x,\xi) \,dS(\xi)
\end{align}
where integration of elements of
$\otimes^m \E^*$ 
is performed by embedding them in 
$\otimes^m \T^*\Hn$.
For
$\lambda\in\C$, 
the Poisson operator may be now defined as
\begin{align}
\Poisson_\lambda : \left\{	 \begin{array}{rcl}
			\Dist(\Sn ; \Sym^m_0 \T^* \Sn) & \to & C^\infty(\Hn ; \Sym^m_0 \T^*\Hn) \\
			\omega & \mapsto & \pi_{0*} \left( (\Phi_-)^{\lambda} \mathcal{Q}_- \omega \right)
		\end{array}
		\right.
\end{align}
There is a useful change of variables which allows the integral to be performed on the boundary 
$\Sn$.
Specifically, upon introducing the Poisson kernel, we may write
\begin{align}\label{eqn:alternatePoissondefn}
\Poisson_\lambda \omega (x)
=
\int_{\Sn} P(x,y)^{n+\lambda} 
						\left( \otimes^m {\tau_-}_{(x,\xi_{-})}^* \right) \omega(y)
						\,dS(y)
\end{align}
for
$\xi_-=\xi_-(x,y)$.

\subsection{Asymptotics of the Poisson operator}\label{subsec:AssOfPoisson}
We start by recalling a weak expansion detailed in 
\cite[Lemma 6.8]{dfg}.
For this we appeal to the diffeomorphism $\phi$ detailed in 
\cite[Definition 2.1]{h:quantum}.
That is, take
$\rho$ 
an even boundary defining function, from which
the flow of the gradient
$\grad_{\rho^2 g}(\rho)$ induces a diffeomorphism
$\phi:[0,\ve)\times\Sn\to\overline\Hn$.
By implicitly using 
$\phi$
we identify a neighbourhood of the boundary of
$\overline\Hn$
with
$[0,\ve)_\rho\times \Sn$.
Given
$\Psi \in C^\infty(\Sn ; \Sym^m \T\Sn)$ 
we define for
$\rho$
small
\begin{align}
\psi(\rho,y) : = (\otimes^m {\tau_-}_{(x,\xi_-)})\Psi(y)
\end{align}
for
$x=(\rho,y)$
and
$\xi_-=\xi_-(x,y)$.

\begin{lemma}\label{lem:asymptotics-from-dfg}
Let 
$\omega\in \Dist(\Sn;\Sym^m\T^*\Sn)$
and
$\lambda\in \C\backslash(-\frac n2 - \frac 12 \N_0)$.
For each
$y\in \Sn$,
there exists a neighbourhood 
$U_y\subset \overline\Hn$
of 
$y$
and an even boundary defining function
$\rho$
such that for any
$\Psi\in C^\infty(\Sn ; \Sym^m \T\Sn)$
with support contained in
$U_y\cap\Sn$
and giving
$\psi\in C^\infty( (0,\ve)\times\Sn ; \Sym^m \T\Sn)$
as above,
there exists
$F_\pm \in C^\infty_\even ([0,\ve))$
such that
\begin{align}
\int_{\Sn} \left(
		(\Poisson_\lambda \omega)(\rho,y) , \psi(\rho,y)
		\right)
		dS(y)
=
\begin{cases}
\rho^{-\lambda}F_-(\rho) + \rho^{n+\lambda} F_+(\rho), & \lambda\not\in -\frac n2 + \N ; \\
\rho^{-\lambda}F_-(\rho) + \rho^{n+\lambda} \log (\rho) F_+(\rho), & \lambda \in -\frac n2 +  \N ,
\end{cases}
\end{align}
where
$dS$ is the measure obtained from the metric
$\rho^2 g$
restricted to
$\Sn$.
Moreover,
if 
$\omega$
and
$\Psi$
have disjoint supports,
then the expansion may be written
\begin{align}
\begin{cases}
\rho^{n+\lambda} F_+(\rho), & \lambda\not\in -\frac n2 + \N ; \\
\rho^{n+\lambda} ( \log (\rho) F_+(\rho) + F_+'(\rho)), & \lambda \in -\frac n2 +  \N ,
\end{cases}
\end{align}
for 
$F_+'\in C^\infty_\even([0,\ve))$.
\end{lemma}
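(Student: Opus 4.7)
The plan is to follow the strategy of \cite[Lemma 6.8]{dfg}, exploiting the explicit forms of the Poisson kernel and of $\tau_-^*$ in the upper half-space model. First I would use a partition of unity on $\Sn$ to reduce the problem to a local question near a fixed boundary point, and pass to the upper half-space chart with standard coordinates $(\rho,y)$, so that $\rho$ serves as an even boundary defining function. Writing the inner integral of \eqref{eqn:alternatePoissondefn} explicitly and inserting \eqref{eqn:evenexpansiontau} together with the explicit formula for $P$, the pairing $\int_{\Sn}(\Poisson_\lambda\omega,\psi)\,dS(y)$ becomes a double integral against $\omega(y')$ of $P(x,y')^{n+\lambda}$ multiplied by a product of $\tau_-^*$ factors, each of whose non-$\rho^{-1}$ component is $C^\infty_\even$ in $\rho$.

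Second, I would split the $y'$-integral into a near-diagonal piece, where $|y'-y|=O(\rho)$, and an off-diagonal piece, where $|y'-y|$ is bounded below. For the near-diagonal piece the key step is the rescaling $y'=y+\rho z$, under which $\rho^2+|y-y'|^2=\rho^2(1+|z|^2)$ and $dy'=\rho^n\,dz$. Combining this with the homogeneity of $P^{n+\lambda}$ and the $\rho^{-1}$ prefactors of the $\tau_-^*$'s produces an overall factor of $\rho^{-\lambda}$ times an integral whose residual $\rho$-dependence enters only through even smooth functions of $\rho^2$; pairing against $\omega$ (a distribution of finite order) then yields a contribution $\rho^{-\lambda}F_-(\rho)$ with $F_-\in C^\infty_\even([0,\ve))$. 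For the off-diagonal piece, $|y-y'|$ stays bounded below, so $\rho^{-1}P(x,y')$ is smooth and even in $\rho^2$ uniformly on the relevant support, and after pulling an overall $\rho^{n+\lambda}$ out of $P^{n+\lambda}$ the pairing with $\omega$ yields $\rho^{n+\lambda}F_+(\rho)$ with $F_+\in C^\infty_\even([0,\ve))$. Summing gives the generic expansion.

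Third, logarithmic terms appear precisely when the two indicial exponents collide, i.e.\ $-\lambda=n+\lambda+2j$ for some $j\in\N_0$, equivalently $\lambda\in -\tfrac n2+\N_0$; the degenerate case $\lambda=-\tfrac n2$ is removed by the standing hypothesis $\lambda\not\in -\tfrac n2-\tfrac12\N_0$, leaving exactly $\lambda\in -\tfrac n2+\N$ as the resonant exponents for which the even Taylor series of $F_-$ collides with the $\rho^{n+\lambda}$ series and produces a $\rho^{n+\lambda}\log\rho$ term. For the disjoint-support case one simply notes that the near-diagonal piece is identically zero for $\rho$ small enough, so the $\rho^{-\lambda}F_-(\rho)$ contribution disappears; the residual $F_+'$ appearing in the resonant variant of the disjoint-support expansion comes from the holomorphic regularisation of the off-diagonal integrand at the resonant $\lambda$. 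The principal technical obstacle I anticipate is the careful bookkeeping of the tensor factors: each of the $m$ instances of $\tau_-^*$ contributes its own $\rho^{-1}$, and one must track how these combine with the pairing against $\psi$ (itself built from $\tau_-$) to ensure that the effective coefficient functions of $\rho$ are genuinely even and that no stray odd powers of $\rho$ spoil the even conormal structure.
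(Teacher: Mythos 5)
The paper does not actually re-prove this lemma; it is stated with \verb|\qed| immediately following and is imported verbatim from \cite[Lemma 6.8]{dfg}, with Remark~\ref{rem:lem-dfg} recording that in that proof the evenness comes from the Bessel-function expansions that appear after a Fourier-type reduction. Your proposal therefore takes a genuinely different route: rather than transforming to a Bessel representation, you split the $y'$-integral into a near-diagonal piece (rescaled by $y'=y+\rho z$) and an off-diagonal piece, and read off the two indicial exponents $-\lambda$ and $n+\lambda$ directly from the homogeneity of the Poisson kernel. That decomposition is the right way to see where the two exponents and the resonant log come from, and your observation that the disjoint-support refinement follows because the near-diagonal piece vanishes identically for small $\rho$ is exactly the point recorded in the paper's remark (differential, not pseudo-differential, operators do not enlarge supports).

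However, there is a real gap in the near-diagonal analysis that you flag only in passing: after the rescaling $y'=y+\rho z$ the claim that ``the residual $\rho$-dependence enters only through even smooth functions of $\rho^2$'' is not evident, and it is precisely the hard part of the argument. Concretely, from \eqref{eqn:evenexpansiontau} the factors $\tau_-^*$ are of the form $\rho^{-1}\bigl(b\,\rho\,d\rho+\sum b_{ij}dy_j\bigr)$ with $b,b_{ij}$ even in $\rho$ at fixed $(y,y')$. After the substitution $r=y-y'=-\rho z$ one has $\rho^2+r^2=\rho^2(1+|z|^2)$, and the coefficient of the $d\rho$ direction becomes $\frac{2\rho z_i}{1+|z|^2}$, which is odd in $\rho$, while the $dy_j$ coefficients become $\rho$-independent. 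So the rescaled integrand is not manifestly even: one must argue that the odd $d\rho$-components are killed when paired with $\psi$ (which, near $r=0$, has no $\partial_\rho$ component) and that no odd power survives after the $z$-integration and the pairing against the distribution $\omega$. In the DFG proof this bookkeeping is circumvented by the Bessel function machinery, whose small-argument expansions are automatically even in $\rho$. Your sketch asserts the conclusion but does not supply the cancellation argument, and without it the claimed $F_\pm\in C^\infty_\even$ is unjustified. Relatedly, the resonant case $\lambda\in-\tfrac n2+\N$ is claimed by appeal to ``collision of exponents,'' but that heuristic gives only a general log-polyhomogeneous expansion, not the specific form $\rho^{n+\lambda}\log(\rho)F_+(\rho)$ with a single power of $\log\rho$ and even $F_+$; pinning that down requires the same sort of explicit control that the Bessel approach provides.
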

\qed
\begin{remark}\label{rem:lem-dfg}
The evenness is a consequence of the even expansions of the Bessel functions appearing in the proof.
The additional conclusion when $\omega$ and $\Psi$ have distinct supports is due to Equation~6.31 in the proof as well as the final equation displayed in the proof. In particular, the differential operators (rather than pseudo-differential operators) which appear do not enlarge the supports of $\omega$ and $\Psi$.
Finally, if $\omega$ and $\Psi$ have supports with non-trivial intersection, then $F_-(0)\neq0$.
\end{remark}

\begin{proposition}\label{prop:RuelleToQuantum}
For 
$\lambda\in \C\backslash (-\frac n2 - \frac 12 \N_0)$,
%$\lambda\in \C\backslash (- \frac 12 \N_0)$,
the pushforward map
$\pi_{0*}:\Dist(S\Xone ; \Sym^m\E^*)\to\Dist(\Xone; \Sym^m \T^*\Xone)$
restricts to a linear isomorphism of complex vector spaces
\begin{align}
\pi_{0*}:
\Resonant{A,m}{j}{\lambda_0} \cap \ker \trace \cap \ker \nm
\to
\Resonant{\Delta,m}{j}{\lambda_0+n}.
\end{align}

\end{proposition}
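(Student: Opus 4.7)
The plan is to use the Poisson operator as the bridge between Ruelle resonant states on $S\Xone$ and quantum resonant states on $\Xone$. For the simple-pole case $j=1$, given $u \in \Resonant{A,m}{1}{\lambda_0}\cap \ker\trace\cap\ker\nm$, its pullback admits the representation $\pi_\Gamma^* u = (\Phi_-)^{\lambda_0}\mathcal{Q}_-\omega$ for a unique $\omega \in \Bd_m(\lambda_0)$, by the identification displayed at the start of Section~\ref{sec:poisson}. Since $\pi_{0*}$ is integration over the compact fibres of $\pi_S$ and descends through the covering $\pi_\Gamma$, one has $\pi_\Gamma^*(\pi_{0*}u) = \Poisson_{\lambda_0}\omega$ on $\Hn$. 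Standard intertwining properties of the Poisson operator then show that $\pi_{0*}u$ is annihilated by $\n^*\n - s_0(n-s_0)-m$ with $s_0 := \lambda_0+n$, is trace-free, and is divergence-free.

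The asymptotic structure is extracted from Lemma~\ref{lem:asymptotics-from-dfg}. Since $\omega$ is supported in $K_\Gamma$ whereas $\partial\overline\Xone$ corresponds to $\Omega_\Gamma/\Gamma$, any test section localised near $\partial\overline\Xone$ has support disjoint from $\omega$; the only surviving asymptotic contribution is the outgoing branch $\rho^{n+\lambda_0}F_+(\rho)$ with $F_+ \in C^\infty_\even$. Absorbing the $\rho^{-m}$ rescaling inherent in the tensor twist $\mathcal{Q}_-$ (visible in \eqref{eqn:evenexpansiontau}), this places $\pi_{0*}u$ in $\rho^{s_0-m}C^\infty_\even(\overline\Xone;\Sym^m_0\T^*\Xone)\cap\ker\div$, which by Lemma~\ref{lem:AssOfQRes} exhibits it as an element of $\Resonant{\Delta,m}{1}{s_0}$. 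Injectivity and surjectivity reduce to the corresponding bijective properties of $\Poisson_\lambda$ restricted to $\Bd_m(\lambda_0)$, established later in Section~\ref{sec:poisson} under the hypothesis $\lambda_0 \notin -\tfrac n2 - \tfrac12\N_0$.

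For higher Jordan order $j>1$ the argument proceeds via meromorphic families. A generalised Ruelle state $u$ of Jordan order $j$ arises as a Laurent coefficient of $\Resolvent{A,m}{\lambda}\psi$ near $\lambda_0$ for a suitable test section $\psi$; pulling back to $\Hn$ produces a Laurent family of boundary distributions $\omega_\lambda$ with a pole of order at most $j$ at $\lambda_0$. Applying $\Poisson_\lambda$ yields a meromorphic family of solutions to the quantum equation, and expanding $\rho^{s-m}$ about $s_0$ generates exactly the factors $(\log\rho)^k$ appearing in the characterisation of Lemma~\ref{lem:AssOfQRes}. Matching Laurent coefficients then identifies $\pi_{0*}u$ as an element of $\Resonant{\Delta,m}{j}{s_0}$, and conversely an element of the latter is recovered as the residue of $\Poisson_\lambda\omega_\lambda$ for a Jordan chain of boundary data reconstructed triangularly from its $(\log\rho)^k$ expansion.

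The main obstacle is the bookkeeping at higher Jordan order: one must show that the pole order of the Ruelle resolvent matches that of the quantum resolvent under the Poisson intertwining, and that the induced map respects the Jordan filtrations $(A+\lambda_0)^k$ and $(\n^*\n-s_0(n-s_0)-m)^k$ level by level rather than merely on the aggregate generalised resonant spaces. This is where the asymptotic characterisation of Lemma~\ref{lem:AssOfQRes} becomes indispensable, providing the precise dictionary between powers of the quantum operator and powers of $\log\rho$ in the asymptotic expansion.
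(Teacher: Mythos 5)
Your overall strategy coincides with the paper's: use $\pi_{0*}$ together with the boundary identification $\mathcal{Q}_-$ and the Poisson operator, read off the asymptotics via the upper half-space model and the support of the boundary distribution in $K_\Gamma$, and then invoke Lemma~\ref{lem:AssOfQRes} to land in $\Resonant{\Delta,m}{j}{\lambda_0+n}$. The $j=1$ case you sketch is essentially the paper's argument (although you cite Lemma~\ref{lem:asymptotics-from-dfg} where the paper instead uses the explicit evenness of $\tau_-^*$ and smoothness of $\rho^{-1}P(x,y)$ in the injective direction, reserving Lemma~\ref{lem:asymptotics-from-dfg} and Remark~\ref{rem:lem-dfg} for the support check in the surjective direction).

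For $j>1$ there is a genuine gap. You say the generalised Ruelle state ``produces a Laurent family of boundary distributions $\omega_\lambda$'', but this does not follow automatically: a generalised resonant state $u^{(k)}$ of Jordan order $k>1$ is not of the form $(\Phi_-)^{\lambda_0}\mathcal{Q}_-\omega$ because $(\Phi_-)^{-\lambda_0}\tilde u^{(k)}$ is not annihilated by $A$. The crucial step the paper performs is the explicit ansatz
\begin{align}
v^{(k)} := (\Phi_-)^{-\lambda_0}\sum_{\ell=1}^k \frac{(-\log\Phi_-)^{k-\ell}}{(k-\ell)!}\,\tilde u^{(\ell)},
\end{align}
whose $\log\Phi_-$-weights are tuned so that $A v^{(k)}=0$ and $\nm v^{(k)}=0$, giving a bona fide boundary distribution $\omega^{(k)}:=\mathcal{Q}_-^{-1}v^{(k)}$. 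This construction is what makes ``matching Laurent coefficients'' rigorous; the residue of $\Resolvent{A,m}{\lambda}\psi$ alone only produces the projector, not a Jordan chain of boundary data. Moreover, for $k>1$ the $\omega^{(k)}$ do not lie in $\Bd_m(\lambda_0)$: they satisfy the weaker triangular equivariance
\begin{align}
U_\gamma^*\omega^{(k)} = (T_\gamma)^{-\lambda_0-m}\sum_{\ell=1}^k \frac{(-\log T_\gamma)^{k-\ell}}{(k-\ell)!}\,\omega^{(\ell)},
\end{align}
and establishing this (and, in the surjective direction, verifying it by comparing two expressions for $(\gamma^*-1)\psi^{(k)}$) is a non-trivial point your proposal omits. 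Finally, in the surjectivity step you invoke ``the corresponding bijective properties of $\Poisson_\lambda$'' without checking the two prerequisites the paper handles carefully: temperedness of $\pi_\Gamma^*\varphi^{(1)}$ (needed to apply the surjectivity of the Poisson transform of \cite{dfg}), and the support/wave-front conditions ($\supp\subset K_+$, $\WF\subset E_+^*$) on the reconstructed $u^{(k)}$, without which they are not generalised Ruelle resonant states in the sense of \cite{dyatlov-g}.
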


\begin{proof}

Consider 
$u^{(k)}\in\Resonant{A,m}{k}{\lambda_0} \cap \ker \trace \cap \ker \nm$
for
$1\le k \le j$
such that
$(A+\lambda_0)u^{(k)}=-u^{(k-1)}$
and
$(A+\lambda_0)u^{(1)}=0$.
We may suppose that
$u^{(k)}\neq 0$.
We lift these generalised resonant states to
$\tilde u^{(k)}:=\pi_\Gamma^* u^{(k)}$
whose supports are contained in
$\pi_\Gamma^{-1}(K_+)$.
Define
\begin{align}
\tilde\varphi^{(k)} &: = \pi_{0*} \tilde u^{(k)},
\\
\varphi^{(k)} &: = \pi_{0*} u^{(k)}.
\end{align}

Now $\varphi^{(1)}$ is a quantum resonance.
Indeed, the distribution 
$v^{(1)}:=(\Phi_-)^{-\lambda_0}\tilde u^{(1)}$
is annihilated by 
$A$
(as well as both 
$\trace$
and
$\nm$)
so there exists 
$\omega^{(1)}\in \Bd_m(\lambda_0)$
such that
$\tilde u^{(1)} = (\Phi_-)^{\lambda_0} \mathcal{Q}_- w^{(1)}$.
The properties of the Poisson transformation imply that
$\tilde\varphi^{(1)}=\Poisson_{\lambda_0} \tilde u^{(1)}$ 
is trace-free,
divergence-free and in the kernel of
$(\Delta - s_0(n-s_0)-m)$
for
$s_0:=\lambda_0+n$. 
The same statement is true for 
$\varphi^{(1)}$.
Considering the alternative definition for the Poisson operator
\eqref{eqn:alternatePoissondefn},
as well as the upper half-space model, we recall 
the structure of 
$\otimes^m{\tau_-}^*$
from
\eqref{eqn:evenexpansiontau}
and that
$\rho^{-1} P(x,y)$ 
is smooth except at 
$x=(0,y)$. 
Since 
$\omega^{(1)}$
has support contained in 
$K_\Gamma$
disjoint from
$\Omega_\Gamma$
(and
$\overline\Xone=\Gamma\backslash(\Xone\sqcup\Omega_\Gamma)$)
we conclude that
$\varphi^{(1)}\in\rho^{s_0-m} C^\infty_\even(\Xone;\Sym^m\T^*\Xone)$.
This is the characterisation of quantum resonances given in 
Lemma~\ref{lem:AssOfQRes}.
Therefore, as claimed, 
$\varphi^{(1)}$
is a quantum resonance.

We now show that 
$\varphi^{(k)}$
is a generalised quantum resonant.
Define
\begin{align}
v^{(k)} := (\Phi_-)^{-\lambda_0} \sum_{\ell =1}^k \frac{(-\log \Phi_-)^{k-\ell}}{(k-\ell)!} \tilde u^{(\ell)}.
\end{align}
Then a direct calculation shows
$A \tilde v^{(k)}=0$
and, since
$\dm \Phi_-=0$,
it also follows that
$\nm \tilde v^{(k)}=0$.
So let
$\omega^{(k)} \in \Dist(\Sn ; \Sym^m_0 \T^*\Sn)$
with
$\mathcal{Q}_- \omega^{(k)}:= v^{(k)} $
and note
$\supp(w^{(k)})\subset K_\Gamma$.
Rewriting 
$\tilde u^{(k)}$ 
in terms of 
$\tilde v^{(k)}$,
\begin{align}
\tilde u^{(k)} = (\Phi_-)^{\lambda_0} \sum_{\ell =1}^k \frac{(\log \Phi_-)^{k-\ell}}{(k-\ell)!} \tilde v^{(\ell)}
\end{align}
and observing that
\begin{align}
\partial_\lambda^{(k-\ell)} \Poisson_{\lambda_0} \omega^{(\ell)}
=
\pi_{0*} \left( (\Phi_-)^{\lambda_0} (\log \Phi_-)^{k-\ell} \mathcal{Q}_- w^{(\ell)} \right)
\end{align}
we obtain
\begin{align}
\tilde\varphi^{(k)}  = \pi_{0*} \tilde u^{(k)} 
=
\sum_{\ell =1}^k \frac{ \partial_\lambda^{(k-\ell)} \Poisson_{\lambda_0} w^{(\ell)} }{(k-\ell)!}.
\end{align}
Taylor expanding
$(\Delta+\lambda(n+\lambda)-m)\Poisson_\lambda(w^{(k-\ell)})=0$
about
$\lambda_0$
implies
\begin{align}
(\Delta+\lambda_0(n+\lambda_0)-m) \frac{ \partial_\lambda^{(\ell)} \Poisson_{\lambda_0} w^{(k-\ell)} }{\ell!}
+(2\lambda_0+n) \frac{\partial_\lambda^{(\ell-1)} \Poisson_{\lambda_0} w^{(k-\ell)} }{(\ell-1)!}
+ \frac{\partial_\lambda^{(\ell-2)} \Poisson_{\lambda_0} w^{(k-\ell)} }{(\ell-2)!}
=
0.
\end{align}
By introducing (again) $s_0:=\lambda_0+n$, we deduce that
\begin{align}
(\Delta -s_0(n-s_0)-m)\tilde\varphi^{(k)}
=
-(2s_0-n) \tilde\varphi^{(k-1)}
- \tilde\varphi^{(k-2)}
\end{align}
with the interpretation that
$\tilde\varphi^{(0)}=\tilde\varphi^{(-1)}=0$.
By injectivity of the Poisson operator, 
$\varphi^{(k)}\neq 0$.
A similar expansion for
$\div \Poisson_\lambda(w^{(k-\ell)})=0$
implies
$\div \tilde\varphi^{(k)}=0$.
Recalling the definition of the Poisson operator involving the Poisson kernel, we have
$\partial_\lambda^k P(x,y)^{n+\lambda_0}=P(x,y)^{s_0}(\log P(x,y))^{k}$
and so,
as with the case of
$\varphi^{(1)}$,
we conclude
\begin{align}
\varphi^{(k)} \in \oplus_{\ell=0}^{k-1} \,\rho^{s_0-m} (\log \rho)^\ell \,C^\infty_\even (\Xone ; \Sym^m_0 \T^*\Xone).
\end{align}
and so it is a generalised quantum resonance 
$\varphi^{(k)}\in \Resonant{\Delta,m}{k}{\lambda_0+n}$
by
Lemma~\ref{lem:AssOfQRes}.

In order to show surjectivity of
$\pi_{0*}$,
consider
$\varphi^{(j)}\in\Resonant{\Delta,m}{j}{s_0}$
for
$s_0:=\lambda_0+n$
and define
$\varphi^{(k)}$
for
$1\le k < j$
by
$\varphi^{(k)}:=\AVasyDown_{s_0}^{j-k} \varphi^{(j)}\in\Resonant{\Delta,m}{k}{\lambda_0+n}$
(recalling the definition
$\AVasyDown_{s} : = ( \Delta - s(n-s) - m)$).
We may assume 
$\varphi^{(1)}\neq0$.
By modifying
$\varphi^{(k)}$
via linear terms in 
$\varphi^{(\ell)}$
with 
$1\le \ell < k$,
we may assume
\begin{align}
(\Delta -s_0(n-s_0)-m)\varphi^{(k)}
=
-(2s_0-n) \varphi^{(k-1)}
- \varphi^{(k-2)}.
\end{align}
We lift these modified states from
$S\Xone$
to
$S\Hn$
defining
$\tilde\varphi^{(k)}:=\pi_\Gamma^*\varphi^{(k)}$
which also satisfy the preceding display.

We now prove by induction on
$1\le k \le j$
that there exist
%$\Bd_m(\lambda_0)$
$\omega^{(k)} \in \Dist(\Sn ; \Sym^m_0 \T^*\Sn)$
with
$\supp(\omega^{(k)})\subset K_\Gamma$
such that
\begin{align}
\tilde\varphi^{(k)} 
=
\sum_{\ell =1}^k \frac{ \partial_\lambda^{(k-\ell)} \Poisson_{\lambda_0} \omega^{(\ell)} }{(k-\ell)!}
\qquad
\mathrm{and}
\qquad
U_\gamma^* \omega^{(k)} =(T_\gamma)^{-\lambda_0-m} \sum_{\ell =1}^k \frac{(-\log T_\gamma)^{k-\ell}}{(k-\ell)!} \omega^{(\ell)}.
\end{align}
For
$k=1$, 
this states that for 
$\varphi^{(1)}\in\Resonant{\Delta,m}{1}{s_0}$,
there exists
$\omega^{(1)}\in\Bd_m(\lambda_0)$
with
$\pi^*_\Gamma \varphi^{(1)}=\Poisson_{\lambda_0} \omega$.
To  demonstrate this statement we remark that 
$\tilde\varphi^{(1)}$
is tempered on 
$\Hn$,
(the proof follows ad verbum 
\cite[Lemma 4.2]{ghw}),
so the surjectivity of the Poisson transform 
\cite[Corollary 7.6]{dfg} 
provides
$\omega^{(1)} \in \Dist(\Sn ; \Sym^m_0 \T^*\Sn)$
such that
$\tilde\varphi^{(1)}=\Poisson_{\lambda_0}\omega^{(1)}$.
The equivariance property demanded of
$\omega^{(1)}$ 
under 
$\Gamma$
is satisfied as
$\tilde \varphi^{(1)}=\pi_\Gamma^*\varphi^{(1)}$. 
It remains to confirm that
$\supp(\omega^{(1)})\subset K_\Gamma$.
By
Lemma~\ref{lem:AssOfQRes},
we have the asymptotics
$\varphi^{(1)} \in \rho^{s_0-m} C^\infty_\even(\Xone;\Sym^m\T^*\Xone)$
and so, by
Remark~\ref{rem:lem-dfg},
it is only possible for the weak expansion of
Lemma~\ref{lem:asymptotics-from-dfg}
to hold for arbitrary 
$\Psi\in C^\infty( \Omega_\Gamma ; \Sym^m \T^*\Sn)$
if
$\supp(\omega^{(1)})\subset K_\Gamma$.

For the general situation 
$k>1$
consider
\begin{align}
\psi^{(k)} := \tilde \varphi^{(k)} - \sum_{\ell=1}^{k-1} \frac{ \partial_\lambda^{(k-\ell)}\Poisson_{\lambda_0} \omega^{(\ell)}}{(k-\ell)!}
\end{align}
which is in the kernel of
$\AVasyDown_{s_0}$
by a direct calculation. This gives, by the usual argument, a
%$w^{(k)}\in\Bd_m(\lambda_0)$
$\omega^{(k)}\in\Dist(\Sn ; \Sym^m_0 \T^*\Sn)$
with
$\supp(\omega^{(k)})\subset K_\Gamma$
such that
$\psi^{(k)} = \Poisson_{\lambda_0} \omega^{(k)}$
and establishes the first desired equation.
Now consider 
$(\gamma^*-1)\psi^{(k)}$.
As
$(\gamma^*-1)\tilde \varphi^{(k)}=0$
and
$\gamma^*\circ \Poisson_\lambda = \Poisson_\lambda\circ((T_\gamma)^{\lambda+m}U_\gamma^*)$,
the induction hypothesis gives
\begin{align}
(\gamma^*-1)\psi^{(k)} = - \Poisson_{\lambda_0} \left( (T_\gamma)^{\lambda_0+m}\sum_{\ell=1}^{k-1} \frac{(\log T_\gamma)^{k-\ell} }{(k-\ell)!} U_\gamma^* \omega^{k-\ell} \right)
\end{align}
alternatively as
$\psi^{(k)} = \Poisson_{\lambda_0} \omega^{(k)}$,
the equivariance of
$\Poisson_\lambda$
implies
\begin{align}
(\gamma^*-1)\psi^{(k)} = \Poisson_{\lambda_0}(((T_\gamma)^{\lambda_0+m}U_\gamma^* -1) \omega^{(k)}).
\end{align}
From these two equations and the injectivity of the Poisson operator, we obtain the desired equivariance property for 
$U_\gamma^* \omega^{(k)}$.

We now may reproduce in reverse the beginning of the injectivity direction of this proof. Consider the following elements of
$\Dist(S\Hn;\Sym^m_0\E^*)$
\begin{align}
v^{(k)} := \mathcal{Q}_- \omega^{(k)}
\qquad
\mathrm{and}
\qquad
\tilde u^{(k)} := (\Phi_-)^{\lambda_0} \sum_{\ell =1}^k \frac{(\log \Phi_-)^{k-\ell}}{(k-\ell)!} \tilde v^{(\ell)}.
\end{align}
Then
$\tilde u^{(k)}$
is annihilated by
$\nm$.
The equivariance property of
$\omega^{(k)}$
implies that
$(A+\lambda_0)\tilde u^{(k)}=-\tilde u^{(k-1)}$,
that
$(A+\lambda_0)\tilde u^{(1)}=0$,
and that
$\gamma^* \tilde u^{(k)}=\tilde u^{(k)}$.
So these distributions project down giving
$u^{(k)}\in\Dist(S\Xone;\Sym^m_0\E^*)$.
By the support properties of 
$\omega^{(k)}$,
the support of
$ u^{(k)}$ 
is contained in
$K_+$.
Finally, elliptic regularity implies that the wave front sets of 
$u^{(k)}$
are contained in the annihilators of both
$E^n$
and
$E^u$
hence in
$\left. E^{*u} \right|_{K_+}=E_+^*$.
This is the characterisation of Ruelle resonances so the equality
$\pi_{0*}u^{(k)}=\varphi^{(k)}$
implies surjectivity of the pushforward map $\pi_{0*}$.
\end{proof}

%\section{Proof of Theorem~\ref{thm:CQCorrespondence}}\label{sec:proof}

\section{Proof of Theorem~\ref{thm:CQCorrespondence}}\label{sec:proof}

We now prove 
Theorem~\ref{thm:CQCorrespondence}.
The following proof in fact gives a more precise statement than that announced in the theorem.
In particular, it shows that the isomorphism respects the Jordan order of generalised resonant states.
\begin{proof}[Proof of Theorem~\ref{thm:CQCorrespondence}.]
Generalised Ruelle resonant states are filtered by Jordan order
\begin{align}
\Resonant{A}{}{\lambda_0}
&=
	\bigoplus_{j=1}^{J(\lambda_0)}		\left( \Resonant{A,0}{j}{\lambda_0} / \Resonant{A,0}{j-1}{\lambda_0} \right)
	\\
&= \bigcup_{j=1}^{J(\lambda_0)}		 \Resonant{A,0}{j}{\lambda_0} .
\end{align}
Restricting to a particular Jordan order 
$j$,
generalised Ruelle resonant states
are filtered into bands via 
\eqref{eq:FilterRuelleIntoBands}
\begin{align}
\Resonant{A,0}{j}{\lambda_0} 
&= 
\bigoplus_{m\in \N_0}				\left( \V{A,m}{j}{\lambda_0} / \V{A,m-1}{j}{\lambda_0} \right).
\end{align}
Each band 
$m$ 
of Jordan order 
$j$ 
is identified via 
Proposition~\ref{prop:IdentifyBandsWithSymmetricPowerOfE} 
(and Proposition~\ref{prop:lambda=m-case})
with vector-valued generalised resonant states for the geodesic flow which are in the kernel of the unstable horosphere operator.
\begin{align}
(\dm)^m:
\V{A,m}{j}{\lambda_0} / \V{A,m-1}{j}{\lambda_0} 
\to
\Resonant{A,m}{j}{\lambda_0+m} \cap \ker \nm.
\end{align}
These generalised resonant states are decomposed via
\eqref{eq:DecomposeMTensorsTrace}
according to their trace
\begin{align}
\Resonant{A,m}{j}{\lambda_0+m} \cap \ker \nm
=
	\bigoplus_{k=0}^{\lfloor \frac m2 \rfloor} \adjtrace^k 
		\left( \Resonant{A,m-2k}{j}{\lambda_0+m} 
			\cap \ker \trace \cap \ker \nm
		\right).
\end{align}
Generalised resonant states of the geodesic flow which are in the kernels of the unstable horosphere operator and the trace operator are identified via 
Proposition~\ref{prop:RuelleToQuantum}
with generalised resonant states of the Laplacian acting on symmetric tensors
\begin{align}
\pi_{0*}:
\Resonant{-X,m-2k}{j}{\lambda_0+m} \cap \ker \trace \cap \ker \nm
&\to
\Resonant{\Delta,m-2k}{j}{\lambda_0+m+n}.\qedhere
\end{align}
\end{proof}

%%%%% %%%%% %%%%% %%%%% %%%%% %%%%% %%%%% %%%%% %%%%% %%%%% %%%%% %%%%%

\bibliographystyle{alpha}
\def\arXiv#1{\href{http://arxiv.org/abs/#1}{arXiv:#1}}

\end{document}